\newtheorem{theorem}{Theorem}
\newtheorem{lemma}[theorem]{Lemma}
\newtheorem{corollary}[theorem]{Corollary}
\newtheorem{proposition}[theorem]{Proposition}
\newtheorem{defn}{Definition}
\theoremstyle{remark}
\newtheorem{remark}{Remark}
\newcommand{\RR}{\mathbb{R}}
\newcommand{\CC}{\mathbb{C}}
\newcommand{\ZZ}{\mathbb{Z}}
\newcommand{\NN}{\mathbb{N}}
\newcommand{\PP}{\mathbb{P}}
\newcommand{\E}{\mathrm{e}}
\newcommand{\vn}[1]{\lVert#1\rVert}
\begin{document}

\title{The curve-lengthening flow in inversive geometry} 
\author{Ben Andrews}
\address{Mathematical Sciences Institute, Australia National University.}
\email{Ben.Andrews@anu.edu.au}
\thanks{This research was partly supported by Discovery Projects grant DP15000097 and by Laureate Fellowship FL150100126 of the Australian Research Council.  The authors are grateful for the hospitality of the Yau Mathematical Sciences Center of Tsinghua University where some of the research was carried out.}
 \author{Glen Wheeler}
\address{School of Mathematics and Applied Statistics,
University of Wollongong}
 \email{glenw@uow.edu.au}

\begin{abstract}
We consider an invariant gradient flow for the invariant length functional for co-compact curves in inversive geometry, and prove that solutions exist for all time and converge to loxodromic curves, provided the initial curve is admissible (so that the invariant length element is well defined).
\end{abstract}

\keywords{Conformal differential geometry, curves, high order parabolic equation}
\subjclass[2010]{53C44, 35K55, 58J35}
\maketitle

\section{Introduction}

In this paper we introduce and investigate a parabolic geometric evolution equation for curves which is invariant under inversive (M\"obius) transformations.  Precisely, this is the steepest ascent flow of the invariant length functional, with respect to an invariant $L^2$ inner product. This flow is well-posed for curves which are vertex-free, a condition which does not admit closed curves, so we consider it in the context of co-compact curves:  Curves which are periodic modulo the action of a fixed M\"obius transformation.
Our main result is that any admissible curve evolves under this flow for infinite time, ultimately converging smoothly to a loxodromic curve.  

\begin{theorem}
\label{TMmain}
Let $X_0:\ \RR\to\CC\PP^1$ be an admissible $L$-cocompact curve for which $\int Q^2\,ds<\infty$.  Then there exists a unique solution $X:\ \RR\times[0,\infty)\to\CC\PP^1$ of the inversive curve lengthening flow \eqref{eq:ICLF} which is smooth for $t>0$ and in $C^{0,1/12}\left([0,\infty),C^4(\RR,\CC\PP^1)\right)$ (in particular, the Gauss maps $G(u,t)$ are H\"older continuous on $\RR\times[0,\infty)$).

The solution $X:\RR\times[0,\infty)\rightarrow\CC\PP^1$
converges
exponentially fast in $C^p$ for every $p\geq 0$ to a limiting curve $X_\infty$, which is an $L$-cocompact loxodromic curve.
\end{theorem}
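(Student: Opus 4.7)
The plan is to treat the theorem in three stages: short-time existence with instant parabolic smoothing, long-time existence via \emph{a priori} geometric estimates, and exponential convergence to a loxodromic limit by a gradient-flow argument. For the short-time theory, the flow \eqref{eq:ICLF} is the steepest ascent of $L$ with respect to an $L^2$ inner product, so on admissible curves it is a quasilinear parabolic equation of (high) even order, reparametrisation-invariant and M\"obius-equivariant. For smooth admissible $L$-cocompact data, fixing a gauge (for instance parametrising by the invariant arc length) reduces the flow to a standard quasilinear parabolic PDE, and classical theory yields local-in-time smooth existence and uniqueness. To handle merely $\int Q^2\,ds$-bounded initial data, I would approximate $X_0$ by smooth admissible cocompact curves, solve the flow for each, and pass to the limit using uniform estimates; the fractional H\"older regularity $C^{0,1/12}$ should then emerge from interpolating the instant smoothing of the flow with the $L^2$ bound on $Q$.

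For long-time existence, the key quantity is the invariant length $L[X(t)]$, which is monotone non-decreasing along the flow. One must show (a) that $L[X(t)]$ is bounded above, uniformly in $t$, and (b) that no higher-order geometric invariant blows up in finite time. For (a), the cocompactness constraint together with the M\"obius geometry should give a natural upper bound, essentially because the space of $L$-cocompact curves with fixed monodromy is compact modulo degeneration. For (b), I would develop $L^2$ energy estimates for $Q$ and its invariant-arc-length derivatives: differentiating $\int Q^{2}\,ds$ in time, integrating by parts, and applying Gagliardo--Nirenberg-type interpolation, should yield a hierarchy of differential inequalities that closes on any finite time interval.

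For convergence, one first extracts subsequential limits $X(t_k)\to X_\infty$ using the uniform smoothness bounds together with the cocompact translation action; the first variation identity $\frac{d}{dt}L = \vn{\operatorname{grad} L}_{L^2}^2$ integrated over $[0,\infty)$ gives $\operatorname{grad} L[X(t_k)]\to 0$ in $L^2$, so every subsequential limit is a critical point of $L$ and is therefore loxodromic. To upgrade this to full convergence of the flow at an exponential rate, I would establish a \L{}ojasiewicz--Simon gradient inequality near $X_\infty$: since the functional is real-analytic in local coordinates and the critical set of loxodromes is a finite-dimensional orbit, Simon's framework yields an inequality of the form $|L[X]-L[X_\infty]|^{1-\theta}\leq C\vn{\operatorname{grad} L[X]}$, and a standard ODE comparison then delivers exponential decay in $L^2$, upgraded to every $C^p$ by interpolation with the uniform smooth bounds.

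The principal obstacle, I expect, lies in the long-time existence step with admissibility preserved, especially near the formation of a would-be vertex where $Q$ might vanish: this is precisely where the invariant arc length degenerates and the parabolicity of the flow could fail. Controlling it will likely require a sharp lower bound for $|Q|$, or for an appropriate inversive invariant of it, obtained from a maximum principle or monotonicity formula compatible with M\"obius symmetry.
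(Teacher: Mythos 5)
Your overall architecture (gauge-fixed short-time existence, $L^2$ energy estimates for $Q$ with Gagliardo--Nirenberg interpolation, a length bound from cocompactness, and a gradient-flow convergence argument) parallels the paper's, but the step you yourself flag as the ``principal obstacle'' --- preservation of admissibility and non-degeneration of parabolicity --- is a genuine gap, and your proposed fix targets the wrong quantity with an unavailable tool. Admissibility means monotonicity of the curvature, $k_u>0$; at a would-be vertex one has $k_u\to 0$, where the invariant $Q=\frac14\frac{k^2}{k_u}+\frac{5}{16}\frac{(k_{uu})^2}{(k_u)^3}-\frac14\frac{k_{uuu}}{(k_u)^2}$ typically \emph{blows up} rather than vanishes, so a sharp lower bound for $|Q|$ is not the relevant estimate; and no maximum principle is available for a sixth-order flow (the paper stresses exactly this point). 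The mechanism that closes this gap in the paper, and which is absent from your proposal, is to work entirely at the level of the scalar $Q$ and the inversive Gauss map $G:\RR\to PSL(2,\CC)$: in the invariant parametrisation the evolution \eqref{eq:evolQ} of $Q$ has constant leading coefficient $\frac14 Q^{(6)}$, so it is uniformly parabolic and the energy hierarchy never sees a degeneracy; then, writing $ds=\rho\,du$ for a fixed parameter $u$, the identity $\partial_t\rho=-\mathcal{C}\rho$ with $\mathcal{C}=\frac12Q^{(4)}+2QQ_{ss}+Q_s^2$ bounded gives, by Gronwall, two-sided bounds $\frac1C\le\rho\le C$ on any finite time interval --- which is precisely the statement that no vertex forms --- and the Serret--Frenet system \eqref{eq:SF} together with \eqref{eq:dGdt} then bounds all derivatives of $G$, so that as $t\to T$ the Gauss maps converge smoothly to a limit whose curve is \emph{automatically} admissible and $L$-cocompact, and short-time existence restarts the flow. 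Without this reconstruction step, your estimates on $Q$ alone do not rule out finite-time degeneration of the immersion, so your long-time existence argument does not close.

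Two further points. First, your length upper bound ``because the space of $L$-cocompact curves with fixed monodromy is compact modulo degeneration'' is exactly what requires proof: degenerations are precisely what could drive the length to infinity. The paper's Corollary \ref{cor:lbd} gives the explicit bound $\ell\le\bar\ell=\sqrt{(2n\pi+2\theta)\log\frac1r}$ by parametrising by the normal angle and applying Cauchy--Schwarz, and this quantitative bound is then used twice: to control Gagliardo--Nirenberg constants, and to get $\int_0^\infty\int Q_s^2\,ds\,dt\le\bar\ell$ from $\frac{d\ell}{dt}=\int Q_s^2\,ds$. Second, your \L ojasiewicz--Simon route to convergence is a genuinely different, and heavier, approach than the paper's: the paper shows directly that once $I=\int Q_s^2\,ds$ is small at some time (guaranteed by the integral bound above), it satisfies $\frac{d}{dt}I\le -c_1I\left(1-c_3I^{6/5}\right)$ and hence decays exponentially, with higher norms following by interpolation. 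Note also that \L ojasiewicz--Simon as you state it yields an exponential rate only when the exponent is $\theta=\frac12$, i.e.\ when the loxodromic critical orbit is suitably nondegenerate, which you would have to verify; otherwise the rate is merely polynomial, which falls short of the theorem's claim.
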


We refer the reader to Section \ref{sec:invDG} for the definition of $Q$ and notions of admissible and $L$-cocompactness.
Roughly speaking, (1) $Q$ is the first invariant of inversive geometry (and $ds$ here is the invariant measure), (2) admissible means that the Euclidean curvature is monotone increasing, and (3) $L$-cocompactness is a symmetry assumption: that the curve is generated by successive images of a period under a M\"obius transformation.

Our main result is an analogue in the setting of inversive geometry of some previous results, including the steepest ascent flow for the affine length in special affine geometry of planar curves \cite{ACLF}.  In that setting and present one, the evolution equations are both of high order and highly nonlinear, but their geometric invariance properties nevertheless make them accessible to a complete analysis.  In both cases, the nonlinearity is such that the pointwise admissibility condition (in the present case monotonicity of the curvature along the curve, and in \cite{ACLF} the strict convexity of the curve) is not violated along the evolution, despite the absence of maximum principles for such high order equations.

The paper is organised as follows.
We first introduce (in Section \ref{sec:invDG}) the required inversive differential geometry, including invariant tangent and normal vectors, arc length element and curvature. 
We use an invariant Gauss map, which is a map from the curve to the group $PSL(2,\CC)$ of M\"obius transformations.  From this we deduce analogues of the Serret-Frenet equations, and equations for the variation of inversive invariants under deformations of a curve.  In Section \ref{sec:evoleqn} we introduce the invariant flow and derive the associated evolution equations for various natural quantities, before proving long time existence and convergence through a series of integral estimates.
These estimates assume that the initial data is smooth.
In Section \ref{sec:rough} we are able to weaken the condition on the initial curve via a relaxation procedure, finally giving the full statement of Theorem \ref{TMmain}.

\section{Inversive Differential geometry of curves}\label{sec:invDG}

In this section we provide an introduction to the invariants of curves in inversive geometry, as well as a derivation of the evolution equations for these invariants under deformations of curves.  The essential details of this geometry were already worked out by Mullins \cite{Mullins} in 1917, at least in the context of planar curves.  More complete and geometric treatments were presented by Blaschke \cite{Blaschke}, Takasu \cite{Takasu} and Fialkow \cite{Fialkow}.
 Our treatment in this section emphasizes the `Gauss map' associated to the group of inversive transformations, and derives the invariants and their evolution equations from this.

\subsection{Inversive transformations}

The inversive or M\"obius transformations of the two-sphere $S^2$ can be conveniently described as follows:  We identify $S^2$ with the complex projective space ${\mathbb{CP}}^1$, which is the quotient $\left(\CC^2\setminus\{0\}\right)/\left(\CC\setminus\{0\}\right)$, or the space of complex lines in $\CC^2$.  The group $SL(2,{\CC})$ acts on ${\mathbb C}^2\setminus\{0\}$ by linear transformations, and this induces an action $\rho$ on $\CC\PP^1$.  The kernel of the action $\rho:\ SL(2,\CC)\to\text{diff}(\CC\PP^1)$ consists of $\{I,-I\}$ where $I$ is the identity transformation, so the M\"obius transformations are naturally identified with the group $PSL(2,\CC)=SL(2,\CC)/\{\pm I\}$.  This is a complex Lie group of dimension $3$, or a real Lie group of dimension $6$.  We will denote an element of $\CC^2$ as $z=\left(\begin{array}{c}z_1\\z_2\end{array}\right)$, and for $z\neq 0$ the corresponding element of $\mathbb{CP}^1$ will be denoted by $[z]$ or $\left[\begin{array}{c}z_1\\z_2\end{array}\right]$.

\subsection{The Inversive Gauss map}

In this section we derive the basic structure equations for (admissible) curves in inversive geometry.  We call a curve \emph{admissible} if the geodesic curvature is an increasing function along the curve --- it will follow from our considerations below that this condition is invariant under (orientation-preserving) inversive transformations.  The starting point is the identification of a canonical choice of inversive transformation which brings the curve near a given point to a normal form:

\begin{proposition}\label{prop:inv-norm-form}
Let $I$ be a connected one-dimensional manifold, and let $X:\ I\to \CC\PP^1$ be an admissible smooth immersion. Then 
for each $p\in I$ there exists a unique inversive transformation $G_p\in PSL(2,\CC)$ such that 
$$
G_p(X(p))=\left[\begin{array}{c}0 \\1\end{array}\right]
$$
and
$$
H(G_p(X(q))=o(|\varphi(q)-\varphi(p)|^4)
$$
as $q\to p$, where $H\left(\left[\begin{array}{c}x+iy \\1\end{array}\right]\right)= y-\frac16x^3$, and $\varphi$ is any local chart for $I$ near $p$.
\end{proposition}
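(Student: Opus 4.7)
The strategy is to exploit the transitivity of $PSL(2,\CC)$ on $\CC\PP^1$ and then normalize the $4$-jet of the curve using the stabilizer of $[0:1]$. First I would fix any $G_1\in PSL(2,\CC)$ with $G_1(X(p))=[0:1]$, so that every candidate $G_p$ has the form $H\circ G_1$ for $H$ in the stabilizer $\Sigma$ of $[0:1]$; in the affine chart $w=z_1/z_2$, $\Sigma$ is the four-real-dimensional group of M\"obius maps $w\mapsto Aw/(1+Bw)$ with $(A,B)\in\CC^*\times\CC$.

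With $t=\varphi(q)-\varphi(p)$, I would Taylor expand
\[
w_1(t):=G_1(X(q))=\alpha_1 t+\tfrac12\alpha_2 t^2+\tfrac16\alpha_3 t^3+\tfrac{1}{24}\alpha_4 t^4+O(t^5),
\]
with $\alpha_1\neq 0$ by the immersion assumption, and then compute the Taylor coefficients $\tilde\alpha_k$ of $\tilde w(t):=Aw_1(t)/(1+Bw_1(t))$ as explicit polynomials in $A,B,\alpha_1,\ldots,\alpha_k$. Substituting into $H([x+iy:1])=y-\tfrac16 x^3$ and expanding order by order in $t$ shows that $H(\tilde w(t))=o(|t|^4)$ is equivalent to the four real equations
\begin{align*}
\operatorname{Im}\tilde\alpha_1=0,\quad \operatorname{Im}\tilde\alpha_2=0,\quad \operatorname{Im}\tilde\alpha_3=\tilde\alpha_1^{\,3},\quad \operatorname{Im}\tilde\alpha_4=6\,\tilde\alpha_1^{\,2}\tilde\alpha_2,
\end{align*}
which exactly match the four real degrees of freedom in $(A,B)$.

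I would then solve this triangular system in order. Condition (i) reads $\operatorname{Im}(A\alpha_1)=0$ and fixes $\arg A$ modulo $\pi$; an orientation convention selects the representative with $A\alpha_1>0$. Condition (ii) is real-affine in $B$ once $A$ is partly determined and pins down $\operatorname{Im}(B\alpha_1)$. The remaining pair (iii) and (iv) controls the last two real parameters $|A|$ and $\operatorname{Re}(B\alpha_1)$: after substitution (iii) takes the form $r^2=F(\alpha_1,\alpha_2,\alpha_3,\operatorname{Re}(B\alpha_1))$ with $r:=A\alpha_1$, while (iv) is real-affine in $\operatorname{Re}(B\alpha_1)$. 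Uniqueness of $G_p$ is then automatic, since each step admits a unique solution.

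The main obstacle is ensuring that (iii) has a real positive root $r$: this requires a specific third-order expression in $\alpha_1,\alpha_2,\alpha_3$ (the inversive invariant controlling the cubic coefficient, essentially a precursor of $Q$) to be strictly positive. This is precisely where admissibility must be invoked: the monotonicity of the Euclidean curvature at $p$ must be translated into strict positivity of this invariant. Once this translation is made, existence and uniqueness of $G_p$ follow, and as a byproduct one sees that admissibility is inversively invariant, since the entire normalization procedure depends only on invariant $4$-jet data at $p$.
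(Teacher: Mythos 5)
Your normalisation scheme is structurally the same as the paper's proof --- reduce to the stabiliser of $[0:1]$ and kill the expansion of $H$ order by order --- and your four real equations (i)--(iv) are the correct ones. The genuine gap is that you never carry out the one step where the hypothesis of the proposition is actually used. You correctly isolate the obstacle: equation (iii) is solvable for $r=A\alpha_1>0$ only if a certain real expression in the $3$-jet is strictly positive, and you then assert that admissibility ``must be translated'' into this positivity. That translation is the heart of the proposition, not a final detail: starting from an arbitrary $G_1$, the coefficients $\alpha_k$ have no visible relation to the curvature, and nothing in your argument connects monotonicity of $k$ at $p$ with positivity of your invariant. The paper closes exactly this point by adapting the first transformation to the curve: it takes the horizontal lift $\hat X$ to $S^3$ with unit-speed parametrisation and uses the $SU(2)$ matrix $G^1_x$ built from $\hat X(x)$ and $\hat X'(x)$. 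In that gauge the chart expansion of the curve has $\alpha_1=1$, $\alpha_2=ik$ purely imaginary, and $\operatorname{Im}\alpha_3=k'$, so the solvability condition is literally $a^4=k'$ and admissibility hands you the unique positive root $a=(k')^{1/4}$. To complete your proof you must either make this adapted choice of $G_1$ from the outset, or prove separately that the sign of your invariant is unchanged when $G_1$ is replaced by $S\circ G_1$ with $S$ in the stabiliser, and then evaluate it in the adapted gauge --- which in effect reproduces the paper's computation.

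There is a second, smaller flaw in the uniqueness argument. As you describe the system it is not triangular: your (iii) reads $r^2=F(\alpha_1,\alpha_2,\alpha_3,u)$ with $u:=\operatorname{Re}(B\alpha_1)$, while the affine-in-$u$ equation (iv) has coefficients depending on $r$ (through $\tilde\alpha_1$ and $\tilde\alpha_2$); so (iii)--(iv) form a coupled nonlinear system in $(r,u)$, and ``each step admits a unique solution'' is not automatic. What rescues the scheme are two cancellations you have not verified: (a) once (ii) holds, i.e.\ $\operatorname{Im}(B\alpha_1)=\frac12\operatorname{Im}(\alpha_2/\alpha_1)$, the $u$-dependence of (iii) cancels identically, leaving $r^2=\operatorname{Im}(\alpha_3/\alpha_1)-3\operatorname{Re}(\alpha_2/\alpha_1)\operatorname{Im}(\alpha_2/\alpha_1)$, a function of the $3$-jet alone; and (b) in (iv) the $u^2$ terms cancel, and after substituting (iii) the coefficient of $u$ equals $4r^3\neq 0$. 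Both facts are true, and with them (plus the admissibility point above and your orientation convention, which matches the paper's requirement that the tangent direction be preserved) your argument goes through; without them the proposal establishes neither uniqueness nor even that your solvability condition is a condition on the curve rather than on the unknown $B$.
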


\begin{proof}
Given $X:\ I\to\CC\PP^1$ there exists a horizontal lifting $\hat X:\ I\to S^3\subset\CC^2$ such that
$X(t) = \left[\hat X(t)\right]$, and $\hat X'(t)$ is orthogonal to the complex line $X(t)$ for each $t\in I$.  The parametrisation of $I$ can be chosen so that $\hat X'$ has length $1$ at each point.  It follows that in this parametrisation
\begin{equation}\label{eq:X2-HorizLift}
\hat X''= - \hat X +ik\hat X',
\end{equation}
where $k:\ I\to\RR$ is the geodesic curvature of the curve, and $i$ is scalar multiplication by the square root of $-1$.

The first step in constructing $G$ is to apply an $SU(2)$ transformation $G^1_x$ to bring $\hat X(x)$ to $\left(\begin{array}{c}0 \\1\end{array}\right)$, with tangent vector $\left(\begin{array}{c}1 \\0\end{array}\right)$:  Explicitly,
\begin{equation}\label{eq:G1x}
G^1_x = \left(\begin{array}{cc}(\hat X'(x))^* \\(\hat X(x))^* \end{array}\right),
\end{equation}
where $u^*$ is the Hermitian transpose of the column vector $u$.

Differentiating equation \eqref{eq:X2-HorizLift} gives expressions for higher derivatives of $\hat X$ as a linear combination of $\hat X$ and $\hat X'$.  This and the initial conditions $G^1_x\hat X(x)=\left(\begin{array}{c}0 \\1\end{array}\right)$ and $G^1_x\hat X' = \left(\begin{array}{c}1 \\0\end{array}\right)$ yield the following Taylor expansion for $G^1_x\hat X$:
\begin{equation}\label{eq:X4-Horiz-Lift}
G^1_x\hat X(x+t)= \left(\begin{array}{c}t+\frac{ik}{2}t^2+\frac{ik'-1-k^2}6t^3-\frac{3kk'+(2k+k^3-k'')i}{24}t^4+o(t^4) \\1-\frac{1}{2}t^2-\frac{ik}{6}t^3+\frac{1+k^2-2ik'}{24}t^4+o(t^4)\end{array}\right).
\end{equation}
Now consider the effect of applying a further $PSL(2,\CC)$ transformation $G^2_x=\left(\begin{array}{cc}a & 0 \\b+ic & a^{-1}\end{array}\right)$, where the vanishing of the upper right component is to ensure $G^2_xG^1_xX(x)=\left[\begin{array}{c}0 \\1\end{array}\right]$, and $a$ must be real to ensure the direction of the tangent vector in $\CC\PP^1$ is unchanged, and can be chosen to be positive by choosing the representative in $PSL(2,\CC)$.  The resulting Taylor expansion is
as follows:
\begin{align}\label{eq:Taylor-trans1}
G^2_xG^1_xX(x+t) &= \left[\begin{array}{c} at+\frac{aki}{2}t^2+\frac{a(ik'-1-k^2)}6t^3-\frac{a(3kk'+(2k+k^3-k'')i)}{24}t^4+o(t^4)\\
\frac1a+(b+ic)t+\frac{bki-a^{-1}-ck}{2}t^2-\frac{b(1+k^2)+ck'+i(a^{-1}k+ck^2-bk'+c)}{6}t^3+o(t^3)
\end{array}\right]\notag\\
&=\left[\begin{array}{c} 
p+iq\\
1
\end{array}\right]
\end{align}
where
$$p=a^2t-ba^3t^2+o(t^2)
$$
and
\begin{align*}q&=\frac{a^2(k-2ca)}{2}t^2
+a^2(\frac{k'}{6}-abk+2a^2bc)t^3\\
&\quad\null+\left(\frac{a^2}{24}(k''+8k-k^3)+\frac{a^3}{12}(7ck^2-8c-4bk')+\frac{3ka^4}{2}(b^2-c^2)+ca^5(c^2-3b^2)
\right)t^4+o(t^4).
\end{align*}
The function $H=q-\frac16p^3$ therefore has the following expansion:
\begin{equation}\label{eq:H-exp1}
H(x+t) = \left(\frac{k}{2}-ac\right)a^2t^2+o(t^2),
\end{equation}
so that $c=\frac{k}{2a}$.  Substituting this choice gives
$$
q=\frac16a^2k't^3+\frac{a^2(k''-8abk')}{24}t^4+o(t^4),
$$
so that
\begin{equation}\label{eq:H-exp3}
H(x+t) = \frac{a^2}{6}(a^4-k')t^3+o(t^3),
\end{equation}
and we must have $a=(k')^{1/4}$.  Finally this produces $p=\sqrt{k'}t-b(k')^{3/4}t^2+o(t^3)$ and $q=\frac{(k')^{3/2}}{6}t^3+\left(\frac{\sqrt{k'}k''}{24}-\frac{b(k')^{7/4}}{3}\right)t^4+o(t^4)$, and hence $H = \left(\frac1{24}\sqrt{k'}k''+\frac16b(k')^{7/4}\right)t^4+o(t^4)$, so that we must choose $b=-\frac{k''}{4(k')^{5/4}}$.
This gives 
\begin{equation}\label{eq:G2}
G^2_x = \left(\begin{array}{cc}(k')^{1/4} & 0 \\-\frac{k''}{4(k')^{5/4}}+\frac{k}{2(k')^{1/4}}i & (k')^{-1/4}\end{array}\right).
\end{equation}
This demonstrates that $G_x=G^2_xG^1_x$ is the unique inversive transformation bringing the curve to the required form.
\end{proof}

We call the map $G=G^2G^1:\ I\to PSL(2,{\mathbb C})$ the \emph{inversive Gauss map} of the curve.

\subsection{Invariant parameter, tangent and normal vectors}\label{subsec:InvParam}

If $X:\ I\to\CC\PP^1$ is an admissible curve, then for each $x\in I$ the invariant transformation $G_x$ is a diffeomorphism of $\CC\PP^1$ which maps $X(x)$ to $\left[\begin{array}{c}0 \\1\end{array}\right]$.  The tangent map $G_x\big|_*$ is a linear isomorphism from the tangent space at $X(x)$ to the tangent space at $\left[\begin{array}{c}0 \\1\end{array}\right]$, the inverse of which takes the vectors $\left(t\mapsto\left[\begin{array}{c}t \\1\end{array}\right]\right)'(0)$ and $\left(t\mapsto\left[\begin{array}{c}it \\1\end{array}\right]\right)'(0)$ respectively to vectors ${\mathcal T}$ and ${\mathcal N}$ which are tangential and orthogonal to the curve at $X(x)$.   These are given explicitly by the following formulae:
\begin{equation}\label{eq:inv-tang-nor}
{\mathcal T} = \left(k'\right)^{-1/2}{\mathbf t};\qquad {\mathcal N} = \left(k'\right)^{-1/2}{\mathbf n},
\end{equation}
where ${\mathbf n}$ and ${\mathbf t}$ are the unit normal and tangent vectors of the curve in $\CC\PP^1$ with the Fubini-Study metric, and the prime denotes the derivative with respect to arc length in the same metric. We refer to these as the inversive tangent and normal vectors to the curve $X$ at $x$.  By construction these are equivariant under inversive transformations:  If $\tilde X = L\circ X$ for some inversive transformation $L$, then the corresponding inversive tangent and normal vectors are given by $L_*({\mathcal T})$ and $L_*({\mathcal N})$. 

Dual to ${\mathcal T}$ is an invariant differential $ds = \left(k'\right)^{1/2}du$, where $du$ is the Fubini-Study arc length element.  Integrating this gives an invariant parameter $s$ along the curve, defined uniquely up to addition of a constant.

\begin{remark}\label{rmk:Euc-expr}
    The reader may find it convenient to work (locally) with curves projected from $\CC\PP^1$ to $\RR^2\simeq\CC$ by stereographic projection.  In this regard it is useful to note that the invariant parameter has a similar expression in terms of the geodesic curvature in $\RR^2$ of the projected curve:   $ds = \sqrt{\frac{\partial k}{\partial u}}du$, where $k$ is the geodesic curvature and $u$ the Euclidean arc length parameter along the projected curve.  This follows since the stereographic projection may be obtained as a limit of maps which combine M\"obius transformations with dilations, each of which preserves the invariant length element.  In particular, the stereographic image of an admissible curve in $\CC\PP^1$ has monotone geodesic curvature in $\RR^2$. 
\end{remark}

\subsection{Serret-Frenet equations}
\label{SerretFrenet}

In this section we derive the system of equations determining the rate of change of the Gauss map, and the corresponding `fundamental theorem of inversive plane curve theory', which implies that an admissible curve is determined (uniquely up to inversive transformations) by a single scalar function of the inversive length parameter, the `fundamental invariant' which we shall denote by $Q$.

A direct way to obtain this expression is to directly differentiate the expression for $G$ with respect to the inversive parameter.  This is straightforward but messy, and yields the following:
\begin{align}\label{eq:SF}
\frac{d}{ds}G &= \left(k'\right)^{-\frac12}\frac{d}{du}G\notag\\
&= \left(\begin{array}{cc}0 & -1 \\Q+\frac{i}{2} & 0\end{array}\right)G
\end{align}
where $Q=\frac14\frac{k^2}{k'}+\frac{5}{16}\frac{(k'')^2}{(k')^3}-\frac14\frac{k'''}{(k')^2}$.  

A more illuminating derivation of the inversive Serret-Frenet formula \eqref{eq:SF} is as follows: Fix $p$ in $I$.   Then by assumption we have
$$
G_p(X(p))=\left[\begin{array}{c}0 \\1\end{array}\right]
$$
and
$$
H\circ G_p(X(q)) = o(|\varphi(q)-\varphi(p)|^4)
$$
in any chart.  It follows that we can choose a chart $\varphi$ such that $\varphi(p)=0$ and 
$$
G_p(X(\varphi^{-1}(x))) = \left[\begin{array}{c}x+i\left(\frac16x^3-\frac{Q}{15}x^5+O(|x|^6)\right) \\1\end{array}\right]\quad\text{as}\ x\to 0
$$
for some $Q$.
Now we can write
$$
G_{\varphi^{-1}(x)}\circ G_p^{-1} = \left(\begin{array}{cc}1+xa & xb \\xc & 1-xa\end{array}\right)+O(x^2)\quad\text{as}\ x\to 0,
$$
for some $a,b,c\in\CC$.
Then we require that
\begin{align*}
\left[\begin{array}{c}0 \\1\end{array}\right] &= \left(G_{\varphi^{-1}(x)}\circ G_p^{-1}\right)\circ G_p\circ X(\varphi^{-1}(x))\\
&=\left(\begin{array}{cc}1+xa & xb \\xc & 1-xa\end{array}\right)
\left[\begin{array}{c}x \\1\end{array}\right]+O(x^2)\\
&=\left[\begin{array}{c}x(1+b) \\1-xa\end{array}\right]+O(x^2)\\
&=\left[\begin{array}{c}x(1+b) \\1\end{array}\right]+O(x^2),
\end{align*}
from which we conclude that $b=-1$.  Also, we have
\begin{align*}
G_{\varphi^{-1}(x)}\circ &X(\varphi^{-1}(x+y))=\left(G_{\varphi^{-1}(x)}\circ G_p^{-1}\right)\circ\left(G_p\circ X(\varphi^{-1}(x+y))\right)\\
&=\left(\begin{array}{cc}1+xa & -x \\xc & 1-xa\end{array}\right)
\left[\begin{array}{c}y+i\left(\frac16y^3-\frac{Q}{15}y^5\right)+x\left(1+i\left(\frac12y^2-\frac{Q}{3}y^4\right)\right) \\1\end{array}\right]+O(x^2+y^6)\\
&=\left[\begin{array}{c} y+i\left(\frac16y^3-\frac{Q}{15}y^5\right)+x\left(ay+\frac{i}2y^2+\frac{ia}{6}y^3-\frac{iQ}{3}y^4\right)
\\1+x\left(-a+cy+\frac{ic}{6}y^3
\right)\end{array}\right]+O(x^2+y^6)\\\\
&=\left[\begin{array}{c}y+i\left(\frac16y^3-\frac{Q}{15}y^5\right)+x\left(2ay+(\frac{i}{2}-c)y^2+\frac{ia}{6}y^3+\left(-\frac{iQ}{3}-\frac{ic}{6}\right)y^4
\right) \\1\end{array}\right]+O(x^2+y^6).
\end{align*}
From this we find that 
\begin{align*}
H\circ &G_{\varphi^{-1}(x)}\circ X(\varphi^{-1}(x+y))\\
&=\frac16y^3+xy\left(2a_2+(\frac12-c_2)y+\frac{a_1}{6}y^2+\left(-\frac{Q}{3}-\frac{c_1}{6}\right)y^3\right)+O(y^5)\\
&\quad\null-\frac{y^3}{6}\left(1+x\left(2a_1-c_1y+O(y^2)\right)\right)^3\\
&=x\left(2a_2y+\left(\frac12-c_2\right)y^2+\left(\frac{a_1}{6}-a_1\right)y^3+\left(-\frac{Q}{3}-\frac{c_1}{6}+\frac{c_1}{2}\right)y^4+O(y^5)\right)+O(x^2)
\end{align*}
where $a=a_1+ia_2$ and $c=c_1+ic_2$.  From this we conclude that $a_2=0$, $q_1=0$, $c_2=\frac12$ and $c_1=Q$, so that
$$
\frac{d}{dx}\left(G_{\varphi^{-1}(x)}\circ G_p^{-1}\right)|_{x=0} = \left(\begin{array}{cc}a & b \\c & -a\end{array}\right)=\left(\begin{array}{cc}0 & -1 \\Q+\frac{i}{2} & 0\end{array}\right).
$$
Noting that this derivative coincides with the derivative with respect to the invariant parameter at $p$, this
confirms \eqref{eq:SF} and identifies $Q$ with the coefficient of the lowest order term in the deviation of $G_p\circ X(I)$ from the model curve $\{y=\frac16x^3\}$.

\begin{remark}
    The Serret-Frenet equation \eqref{eq:SF} leads directly to a corresponding existence result:  For any smooth function $Q$ on $\RR$, and any $G_0\in PSL(2,\CC)$, there exists a unique admissible curve $X:\ \RR\to\CC\PP^1$ with fundamental invariant $Q(s)$ for each $s\in\RR$ for which the Gauss map $G$ satisfies $G(0)=G_0$.  Furthermore, if $Q$ is periodic on $\RR$ then $X$ is $L$-cocompact for some $L\in PSL(2,\CC)$.
\end{remark}
\begin{defn}
    An admissible curve $X$ is called a \emph{loxodromic curve} if the fundamental invariant $Q$ is constant along $X$.  
\end{defn}

Note that by Equation \eqref{eq:SF}, loxodromic curves have Gauss maps defined by one-parameter subgroups of $SL(2,\CC)$:  Since $Q$ is constant, a loxodromic curve has $G(s) = \exp\left(t\begin{bmatrix}0&-1\\Q+\frac{i}2&0\end{bmatrix}\right)G(0)$, and hence $X(s) = \exp\left(t\begin{bmatrix}0&-1\\Q+\frac{i}2&0\end{bmatrix}\right)X(0)$.  Particular examples include the curves $t\mapsto\begin{bmatrix} z_0r^t\E^{i\omega t}\\1\end{bmatrix}$, which project to the complex exponential spirals $t\mapsto z_0r^t\E^{i\omega t}$ in $\CC$ under the stereographic projection, for $0<r<1$ and $\omega>0$. Loxodromic curves are homogeneous in the sense that for any two points on a loxodromic curve there exists a M\"obius transformation preserving the curve which takes one point to the other. 
In Figure \ref{figLox1} we depict three loxodromic curves. and in Figure \ref{figLox2} we show their stereographic projections.

\begin{figure}[t]
    \centering
    \includegraphics[width=0.5\textwidth,trim=15cm 2cm 15cm 2cm,clip=true]{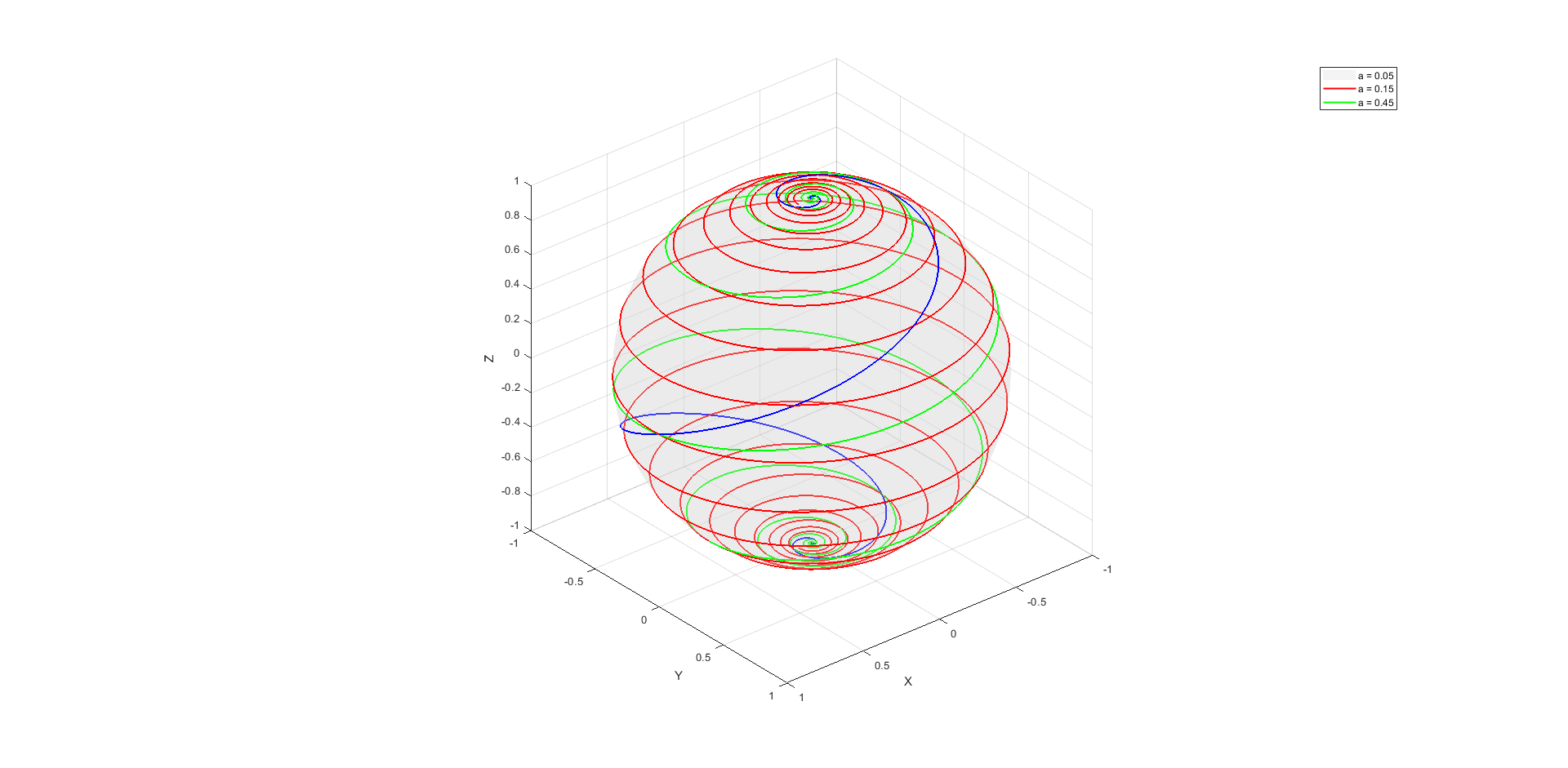}
    \caption{Three loxodromes on \(\mathbb{CP}^1\).
    They are determined by the maps $\left[\begin{array}{c}w_0(u) \\w_1(u)\end{array}\right] = \left[\begin{array}{c}\exp((a+i)u) \\1\end{array}\right]$ in homogeneous coordinates on $\mathbb{CP}^1$.}

  \label{figLox1}
\end{figure}

\subsection{Expansion of the Gauss map in invariant parameter}

We can now derive an expansion for the inversive Gauss map $G$, and hence also the embedding $X= G^{-1}\left(\left[\begin{array}{c}0 \\1\end{array}\right]\right)$, in terms of the inversive invariant $Q$ and the invariant arc length $s$:  For example, since $\partial_sG = MG$ with $M=\left(\begin{array}{cc}0 & -1 \\Q+\frac{i}{2} & 0\end{array}\right)$, we have
$$
\frac{\partial^2}{\partial s^2}G = (\partial_sM + M^2)G,
$$
and so on.  For our purposes it is more useful to obtain an expansion for the inverse transformation $G^{-1}$, which satisfies $\partial_sG^{-1} = -G^{-1}M$:  We find
\begin{align*}
G(0)G^{-1}(s) &= \left(\begin{array}{cc}1 & 0 \\0 & 1\end{array}\right)-s\left(\begin{array}{cc}0 & -1 \\Q+\frac{i}{2} & 0\end{array}\right)-\frac{s^2}2\left(\begin{array}{cc}Q+\frac{i}{2} & 0 \\Q_s & Q+\frac{i}{2}\end{array}\right)\\
&\quad\null-\frac{s^3}{6}\left(\begin{array}{cc}2Q_s & Q+\frac{i}{2} \\Q_{ss}-Q^2+\frac14-iQ & Q_s\end{array}\right)\\
&\quad\null -\frac{s^4}{24}\left(\begin{array}{cc}3Q_{ss}-Q^2+\frac14-iQ & 2Q_s \\Q_{sss}-4QQ_s-2iQ_s & Q_{ss}-Q^2+\frac14-iQ\end{array}\right)+O(s^5).
\end{align*}
\begin{figure}[t]
  \centering
  
  \begin{subfigure}[b]{0.3\textwidth}
    \centering
    \includegraphics[width=\textwidth,trim=13cm 0cm 13cm 0cm,clip=true]{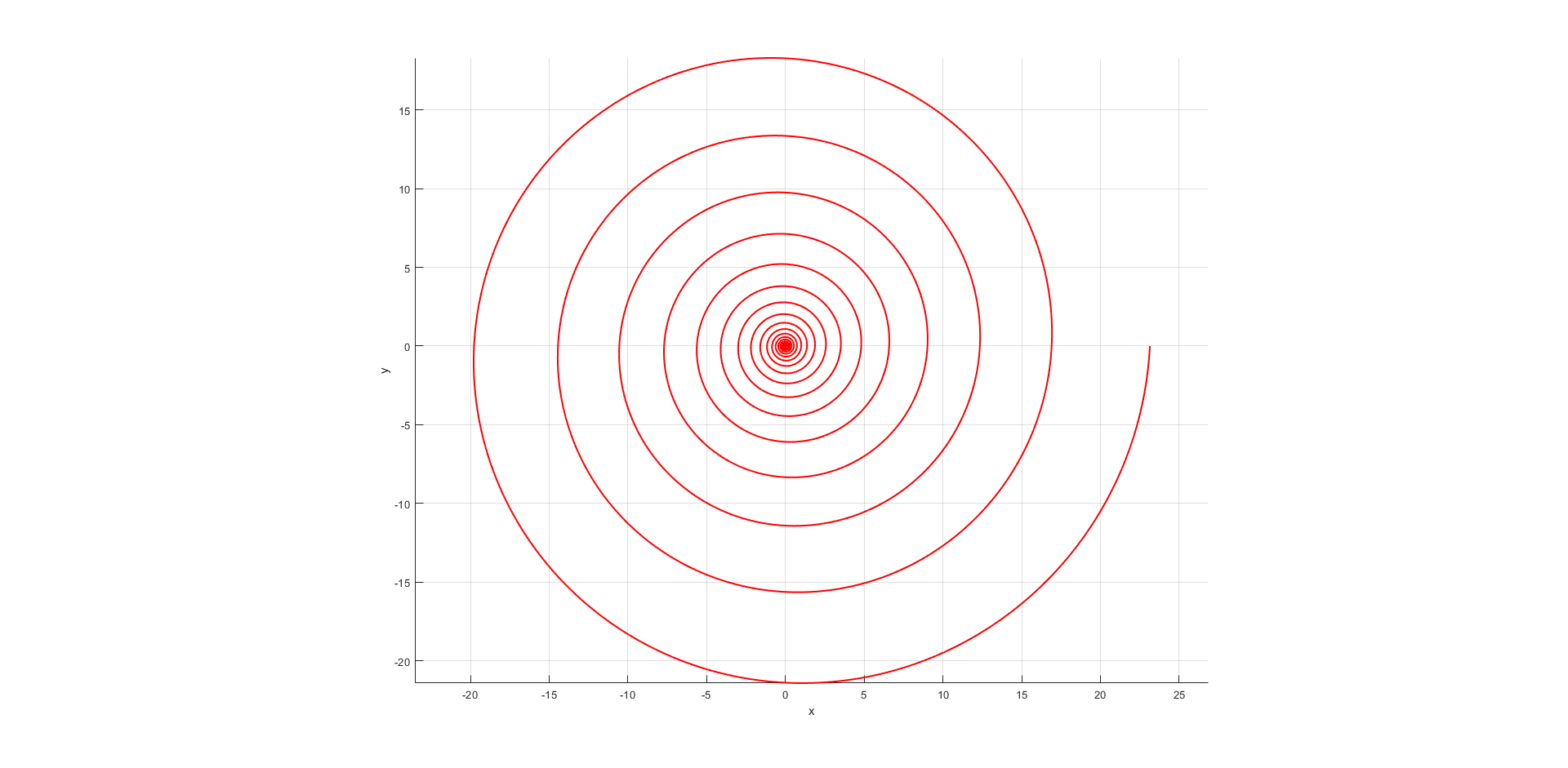}
    \caption{\(a = 0.05\)}
  \end{subfigure}
  \quad 
  \begin{subfigure}[b]{0.3\textwidth}
    \centering
    \includegraphics[width=\textwidth,trim=13cm 0cm 13cm 0cm,clip=true]{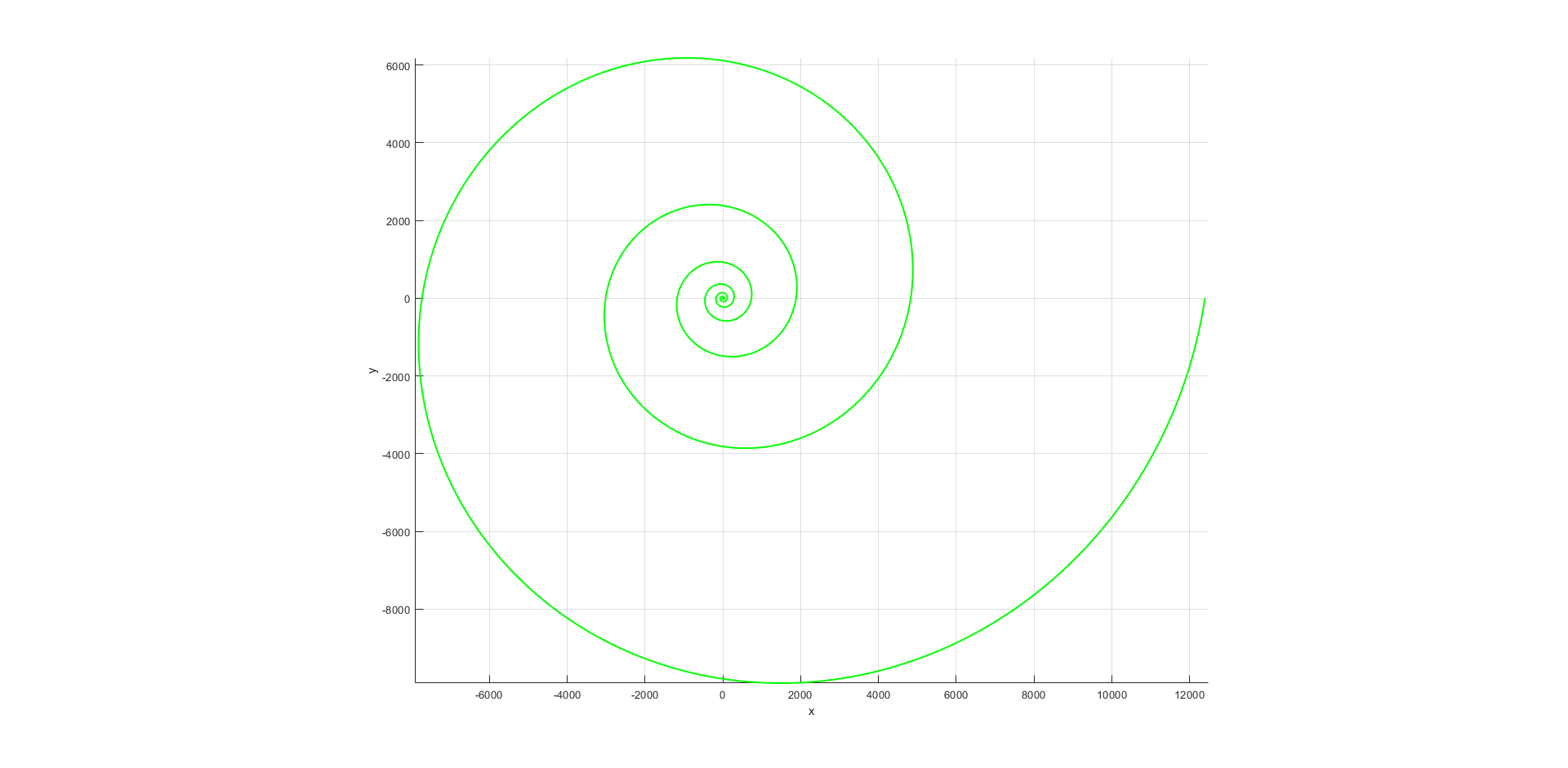}
    \caption{\(a = 0.15\)}
  \end{subfigure}
  \quad 
  \begin{subfigure}[b]{0.3\textwidth}
    \centering
    \includegraphics[width=\textwidth,trim=13cm 0cm 13cm 0cm,clip=true]{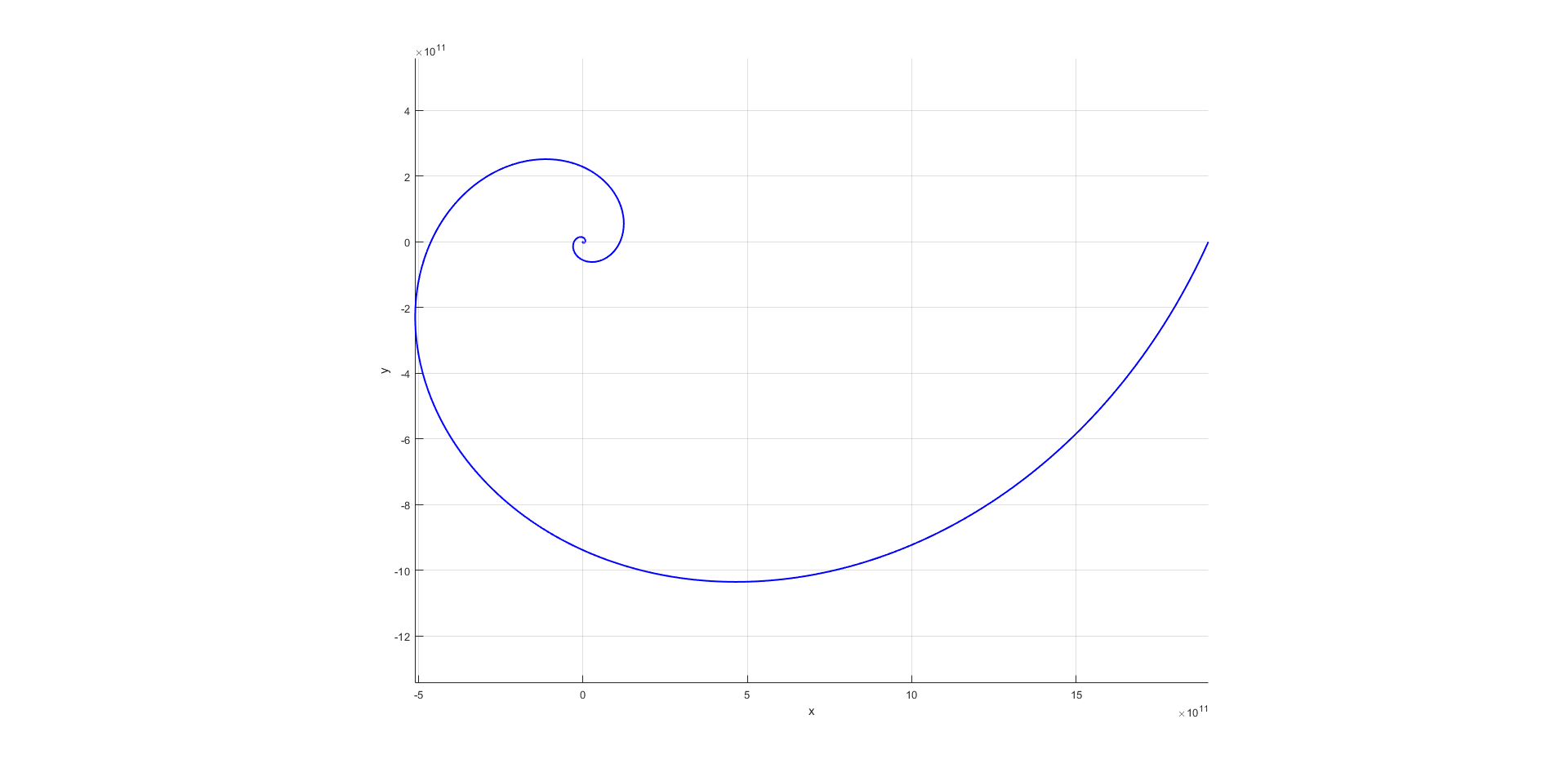}
    \caption{\(a = 0.45\)}
  \end{subfigure}

  \caption{Stereographic projection of the curves from $\mathbb{CP}^1$ in Figure \ref{figLox1}.
In the complex plane their parametrisation is $u\mapsto \exp((a + i)u)$.}
  \label{figLox2}
\end{figure}

\subsection{Variation equations}

We will consider `normal' variations of the form 
\begin{equation}\label{eq:normalvar}
\frac{\partial}{\partial t}X(p,t) = f(p){\mathcal N}(p,t),
\end{equation}
where $f$ is a smooth function on $I$ and ${\mathcal N}$ is the inversive normal vector.  We can compute the derivative with respect to $t$ at a fixed time $t=0$ of the inversive Gauss map $G$ by using the expansion in the inversive parameter derived above, as follows:  Fix $p\in I$, and let $s$ be the inversive parameter for the embedding $X(.,0)$ with $s(p)=0$.  Then ${\mathcal N}(s)$ is the image under $G(s,0)^{-1}$ of the vector $\left(t\mapsto \left[\begin{array}{c}ti \\1\end{array}\right]\right)'(0)$, so we have
$$
X(s,t) = G(s,0)^{-1}\left[\begin{array}{c}tf(s)i \\1\end{array}\right]+O(t^2).
$$
It follows that
\begin{align*}
&G(0,0)X(s,t) = (G(0,0)G(s,0)^{-1}\left[\begin{array}{c}tf(s)i \\1\end{array}\right]+O(t^2)\\
&=\left[\begin{array}{c}s+\frac{i+2Q}{6}s^3-\frac{Q_s}{3}s^4 \\1\end{array}\right]+t\left[\begin{array}{c}-\frac{fs^2}{2}-\frac{f_ss^3}{2}+\frac{(f^{(4)}+12Qf_{ss}+8Q_sf_s-6f_{ss}-126Qf)s^4}{24}\\0\end{array}\right]\\
&\null
+it\left[\begin{array}{c}f+f_ss+\frac{f_{ss}+2Qf}{2}s^2+\frac{f^{(3)}+6Qf_s+2Q_sf}{6}s^3+\frac{2Q_{ss}+16Q^2-4)f}{24}s^4 \\0\end{array}\right]+O(t^2+|s|^5)
\end{align*}
Now we can write $G(0,t)G(0,0)^{-1}=\left(\begin{array}{cc}1 & 0 \\0 & 1\end{array}\right)+t\left(\begin{array}{cc}A & B \\C & -A\end{array}\right)+O(t^2)$, so that
\begin{align*}
\left[\begin{array}{c}0 \\1\end{array}\right] &= G(0,t)X(0,t) = (G(0,t)G(0,0)^{-1})\circ G(0,0)X(0,t)
\\ &= \left(\begin{array}{cc}1+tA & tB \\tC & 1-tA\end{array}\right)\left[\begin{array}{c}tfi \\1\end{array}\right]+O(t^2)
\end{align*}
from which we conclude that $B=-fi$.  Further, we must have $0 = H\circ G(0,t)X(s,t)+O(s^5)$ as $s\to 0$.  
From the expression above, writing $G(0,0)X(s,t) = \left[\begin{array}{c}x+iy \\1\end{array}\right]$, we find
\begin{align*}
x &=s+t\left(2A_1s-\frac{f+2C_1}{2}s^2\right)+O(s^3);\\
y&=\frac16s^3+t\left((f_s+2A_2)s+\frac{f_{ss}+2Qf-2C_2}{2}s^2+\frac{f^{(3)}+6Qf_s+2Q_sf+2A_1+4QA_2}{6}s^3\right.\\
&\null\phantom{\frac16s^3+\qquad}\left.+\frac{f^{(4)}+12Qf_{ss}+8Q_sf_s+(16Q^2+2Q_{ss}-4)f-16C_2Q+4A_2Q_s-8C_1}{24}s^4
\right)+O(s^5)
\end{align*}
where $A=A_1+iA_2$ and $C=C_1+iC_2$.  This leads to the equation
\begin{align*}
0 &= y-\frac16x^3+O(s^5)\\
&=t\left((f_s+2A_2)s+\frac{f_{ss}+2Qf-2C_2}{2}s^2+\frac{f^{(3)}+6Qf_s+2Q_sf-4A_1+4QA_2}{6}s^3\right.\\
&\quad\null\qquad\left.+\frac{f^{(4)}+12Qf_{ss}+8Q_sf_s+(16Q^2+2Q_{ss}+2)f-16C_2Q+4A_2Q_s+4C_1}{24}s^4\right)+O(s^5)
\end{align*}
We solve for $A_1$, $A_2$, $C_1$ and $C_2$ to find $A_2=-\frac12f_s$, $C_2=\frac12f_{ss}+Qf$, $A_1 = \frac14f^{(3)}+Qf_s+\frac12Q_sf$, and
$C_1=-\frac14f^{(4)}-Qf_{ss}-\frac32Q_sf_s-\frac12(Q_{ss}+1)f$, implying that under the normal variation \eqref{eq:normalvar} the inversive Gauss map $G$ evolves according to
\begin{equation}\label{eq:dGdt}
\frac{\partial}{\partial t}G = TG,
\end{equation}
where
$$
T = \left(\begin{array}{cc}\frac14f^{(3)}+Qf_s+\frac12Q_sf-\frac{i}{2}f_s & -fi \\-\frac14f^{(4)}-Qf_{ss}-\frac32Q_sf_s-\frac12(Q_{ss}+1)f+\left(\frac12f_{ss}+Qf\right)i & -\frac14f^{(3)}-Qf_s-\frac12Q_sf+\frac{i}{2}f_s\end{array}\right).
$$

Finally, we can deduce variation equations for the inversive arc length and for the invariant $Q$, as follows:  Differentiating the equation $\partial_sG = MG$ with respect to $t$, we obtain
$$
\partial_t\partial_sG = (\partial_tM+MT)G.
$$
Differentiating $\partial_tG=TG$ with respect to $s$ gives
$$
\partial_s\partial_tG = (\partial_sT+TM)G.
$$
Writing $[\partial_t,\partial_s]={\mathcal C}\partial_s$, we derive the equation
$$
\partial_tM-{\mathcal C}M = \partial_sT+[T,M].
$$
The left-hand side is given by
$$
\left(\begin{array}{cc}0 & {\mathcal C} \\\partial_tQ-{\mathcal C}(Q+\frac{i}{2}) & 0\end{array}\right),
$$
so we can read off from this the quantity ${\mathcal C}$ and the time derivative of $Q$:
\begin{align*}
{\mathcal C} &= -\frac12 f^{(3)}-2Qf_s-Q_sf;\\
\partial_tQ &= -\frac14f^{(5)}-2Qf^{(3)}-\frac52Q_sf_{ss}-(4Q^2+2Q_{ss}+1)f_s-(2QQ_s+\frac12Q^{(3)})f.
\end{align*}
The quantity ${\mathcal C}$ determines the evolution of the arc length element $ds$, since if we write $ds=\rho du$ (for any fixed parametrisation $u$ of $I$) then we have $\partial_s=\rho^{-1}\partial_u$ and $[\partial_t,\partial_u]=0$, so that 
$$
{\mathcal C}\partial_s = [\partial_t,\partial_s] = \partial_t(\rho^{-1})\partial_u = -\frac{\partial_t\rho}{\rho}\partial_s,
$$
implying that $\partial_tds = \partial_t(\rho du) = -{\mathcal C}\rho du=-{\mathcal C}ds$. 

\subsection{Co-compactness}
\begin{figure}[t]
    \centering
    \includegraphics[width=0.4\textwidth,trim=1cm 1cm 1cm 1cm,clip=true]{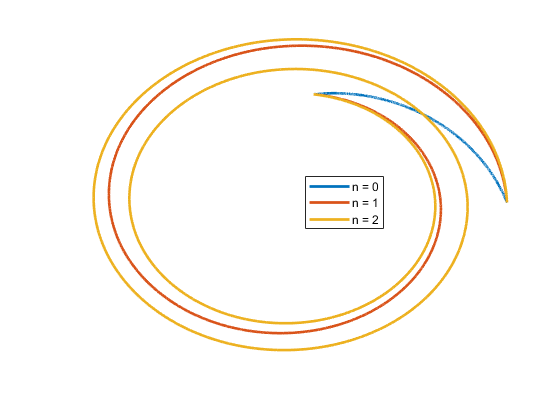}
    \caption{An illustration of three loxodromic curves with $n=0, 1$ and $2$. The integer $n$ counts the winding in each homotopy class.}

  \label{figLoxN}
\end{figure}

For $L\in PSL(2,\CC)$, we say that an admissible curve $X:\RR\to{\CC\PP}^1$ is $L$-co-compact if  $L(X(\RR))=X(\RR)$, with $L(X(s))\neq X(s)$ for every $s\in\RR$.  
It follows (after replacing $L$ by $L^{-1}$ if necessary) that there exists $\ell>0$ such that $L(X(s))=X(s+\ell)$ for all $s\in\RR$, where $s$ is the invariant parameter defined in section \ref{subsec:InvParam}.  Furthermore, the invariant $Q$ and its derivatives with respect to $s$ are invariant under $L$, and the inversive tangent ${\mathcal T}$ and normal ${\mathcal N}$ are equivariant, in the sense that
$L_*|_{X(s)}({\mathcal T}(s)) = {\mathcal T}(s+\ell)$ and $L_*|_{X(s)}({\mathcal N}(s))={\mathcal N}(s+\ell)$.

We observe that (up to a M\"obius transformation) any co-compact admissible curve is periodic with respect to a combination of shrinking and rotation:

\begin{lemma}\label{lem:symmetry}
If $X$ is an admissible $L$-co-compact curve, then there exists a M\"obius transformation $T$ so that $L = T^{-1}\circ S\circ T$, where $S = \left(\begin{matrix} \lambda &0\\0&\lambda^{-1}\end{matrix}\right)$ with $0<|\lambda|<1$.
\end{lemma}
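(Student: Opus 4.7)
The plan is to rule out all conjugacy classes of $PSL(2,\CC)$ other than the strictly loxodromic one, then diagonalize explicitly. Recall that every non-identity $L\in PSL(2,\CC)$ falls into one of three classes: parabolic ($\mathrm{tr}^2(L)=4$, single fixed point), elliptic ($\mathrm{tr}^2(L)\in[0,4)$, two fixed points with $|\lambda|=1$), or loxodromic ($\mathrm{tr}^2(L)\notin[0,4]$, two fixed points with $|\lambda|\ne 1$); being loxodromic is precisely the same as being conjugate to $\mathrm{diag}(\lambda,\lambda^{-1})$ with $|\lambda|\ne 1$. The cocompactness condition $L(X(s))\ne X(s)$ immediately excludes $L=I$, so it is enough to eliminate the parabolic and elliptic cases.

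The elliptic case splits by the order of $L$. If $L^N=I$ for some $N\geq 2$, iterating the cocompactness relation $X(s+\ell)=L(X(s))$ yields $X(s+N\ell)=X(s)$, so the Euclidean geodesic curvature $k$ is $N\ell$-periodic in the invariant parameter. This directly contradicts the strict monotonicity of $k$ that admissibility imposes (and which is needed for $ds=(k')^{1/2}du$ to be a genuine length element in the first place). If $L$ is elliptic of infinite order, I would conjugate to the normal form $z\mapsto e^{i\alpha}z$ with $\alpha/\pi\notin\QQ$; the orbit $\{L^n(X(0))\}=\{X(n\ell)\}$ is then dense in the round circle through $X(0)$ about the origin, and since $X(\RR)$ is connected, $L$-invariant and one-dimensional, it must be contained in this circle. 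A circle in $\CC\PP^1$ has constant geodesic curvature, which again violates strict monotonicity of $k$.

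For the parabolic case, conjugate $L$ to $z\mapsto z+1$ so that $X(s+\ell)=X(s)+1$ in that chart. Then both ends $s\to\pm\infty$ of the curve accumulate at the single fixed point $\infty\in\CC\PP^1$. I would then combine this two-sided convergence to one point with the monotonicity of $k$ to reach a contradiction: using that the inversive tangent has Fubini--Study length $(k')^{-1/2}$, the total FS arc length over the two tails of the curve is $\sum_{n}\int_0^\ell(k')^{-1/2}ds$ along translated periods, and the convergence $X(n\ell)\to\infty$ forces this sum to be finite, hence $k'\to\infty$ at both ends; matching the approach directions and curvatures at the single fixed point against strict monotonicity of $k$, together with the uniqueness of the Gauss normal form $G_p$ from Proposition \ref{prop:inv-norm-form} at that limiting point, produces the required incompatibility.

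Once parabolic and elliptic are excluded, $L$ is loxodromic with eigenvalues $\lambda,\lambda^{-1}\in\CC^\ast$, $|\lambda|\ne 1$. Choose the labelling so that $|\lambda|<1$ (swapping eigenvectors corresponds to conjugating by $\bigl(\begin{smallmatrix}0 & 1\\-1 & 0\end{smallmatrix}\bigr)$, which interchanges $\lambda$ and $\lambda^{-1}$), and let $T\in SL(2,\CC)$ carry the ordered basis of eigenvectors to the standard basis; then $TLT^{-1}=S$, giving $L=T^{-1}ST$ as required. The main obstacle in this plan is the infinite-order elliptic and parabolic cases: one needs a dynamical/topological argument showing that the curve cannot be trapped in the orbit closure of a non-loxodromic cyclic subgroup, and this is exactly where the admissibility hypothesis (monotonicity of $k$), rather than mere $L$-cocompactness, plays an essential role.
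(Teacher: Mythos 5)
Your skeleton---classify $L$ as elliptic, parabolic, or loxodromic, eliminate the first two using admissibility, then diagonalize---is the same as the paper's, and both your finite-order elliptic argument and your final diagonalization step are correct. However, two of your three eliminations have genuine gaps. In the infinite-order elliptic case, the inference ``$X(\RR)$ is connected, $L$-invariant and one-dimensional, hence contained in the circle through $X(0)$'' is false as a topological statement: the image of the curve is $\bigcup_{n\in\ZZ}L^n(A)$ where $A=X([0,\ell])$, and if the arc $A$ leaves the circle (say it bulges outward between its endpoints) this union is still connected, $L$-invariant and one-dimensional, but it is dense in an annulus rather than contained in any circle. Nothing in your argument rules this out, so the contradiction never materializes. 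In the parabolic case you have a sketch, not a proof: the finiteness of the Fubini--Study length of the tails is not established (and is not obviously true), and the closing sentence ``matching the approach directions and curvatures \dots produces the required incompatibility'' asserts the desired conclusion rather than deriving it---as you yourself flag.

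Both gaps are closed by a single observation, which is exactly what the paper uses: in each bad normal form, $S$ acts by \emph{isometries}, and isometries preserve both the relevant geodesic curvature and the invariant parameter, so the cocompactness relation forces $k(s+\ell)=k(s)$, i.e.\ $k$ is $\ell$-periodic, contradicting the strict monotonicity of $k$ demanded by admissibility. For elliptic $L$ (of any order, so your finite/infinite split and the density dynamics are unnecessary), $S$ may be taken unitary, hence a Fubini--Study isometry of $\CC\PP^1$, and the curvature of $TX$ is periodic. For parabolic $L$, $S=\begin{pmatrix}1&c\\0&1\end{pmatrix}$ acts on the stereographic projection of the curve as the Euclidean translation $z\mapsto z+c$, which preserves the Euclidean curvature and the invariant element $ds=\sqrt{k_u}\,du$ of Remark \ref{rmk:Euc-expr}, so the projected curvature is again $\ell$-periodic. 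With these two replacements your proof becomes complete and coincides with the paper's.
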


\begin{proof}
    If $L$ has distinct eigenvalues, then $L$ can be diagonalised by a $GL(2,\CC)$ transformation, hence by an $SL(2,\CC)$ transformation, and exchanging the eigenvalues by conjugating by $\begin{pmatrix}0&i\\i&0\end{pmatrix}$ if necessary gives the required form provided the eigenvalues of $L$ do not lie on the unit circle.

    The remaining possibilities are where $L=T^{-1}\circ S\circ T$ where $S$ is either a unitary matrix $\begin{pmatrix} \mathrm{e}^{i\theta}&0\\0&\mathrm{e}^{-i\theta}\end{pmatrix}$
    or $\begin{pmatrix} 1&c\\0&1\end{pmatrix}$ for $c\in\CC$.  In the first case, $S$ acts on $TX$ by isometries, implying that $k(s+\ell)=k(s)$, contradicting the admissibility of $TX$ (hence of $X$) since the curvature must be monotone.  In the second case, $S$ acts by translations on the stereographic projection of $X$ into $\CC$, again contradicting the admissibility of $X$ (since the geodesic curvature in $\RR^2$ must be monotone as noted in Remark \ref{rmk:Euc-expr}).
\end{proof}

\begin{corollary}
    The homotopy classes of $L$-cocompact admissible curves are determined by $L$ and an integer $n\in\mathbb{N}$, defined by $\int_0^{\ell} k\,du = 2\pi n+2\theta$ along $PTX$, where $P$ is the steregraphic projection $\begin{bmatrix} z\\1\end{bmatrix}\mapsto z$, $T$ is the M\"obius transformation provided by Lemma \ref{lem:symmetry}, $du$ is the Euclidean arc length element and $k$ the Euclidean curvature along $PTX$, and $\lambda = r\E^{\theta}$ is the eigenvalue of $L$ inside the unit circle, with $\theta\in[0,\pi)$.
\end{corollary}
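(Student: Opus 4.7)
The plan has three steps: define $n$, show it is well-posed, and show together with $L$ it classifies the homotopy class.

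First, by Lemma \ref{lem:symmetry} I conjugate by $T$ so that $L = S = \operatorname{diag}(\lambda, \lambda^{-1})$ with $\lambda = re^{i\theta}$, $0<r<1$, $\theta\in[0,\pi)$. Stereographic projection $P$ intertwines the action of $S$ on $\mathbb{CP}^1$ with the complex-plane spiral similarity $z\mapsto \lambda^2 z = r^2 e^{2i\theta} z$, a rotation by $2\theta$ composed with a contraction by $r^2$. Thus the planar curve $\gamma := PTX$ satisfies $\gamma(u+\ell)=\lambda^2\gamma(u)$ after a suitable reparametrization, and its Euclidean unit tangent obeys $\tau(\ell) = e^{2i\theta}\tau(0)$.

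Second, the total signed rotation of $\tau$ over one period is exactly $\int_0^\ell k\,du$ (Euclidean curvature times Euclidean arc length). Matching with the rotation of $\tau$ above forces
\[
\int_0^\ell k\,du \;=\; 2\theta + 2\pi n
\]
for a unique integer $n$, uniqueness following from $2\theta\in[0,2\pi)$. This defines the invariant $n$ associated to the curve. To confirm it is independent of the choice of $T$ (which is only determined by $L$ up to a stabilizer), I would observe that any two choices of $T$ differ by a Möbius transformation commuting with $S$; such transformations preserve both $\ell$ and the signed turning integral, so $n$ is intrinsic to $(X,L)$.

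Third, $n$ depends continuously on the curve along any smooth family of admissible $L$-cocompact curves (with $L$, hence $\theta$, fixed), and being integer-valued it is a homotopy invariant. For the converse, that two admissible $L$-cocompact curves $X_0,X_1$ with the same $n$ lie in the same homotopy class, I would use the reconstruction from the fundamental invariant $Q$ noted after \eqref{eq:SF}: each $X_i$ is encoded by its invariant period $\ell_i$, a periodic $Q_i$, and an initial Gauss map $G_i(0)$, with the holonomy of \eqref{eq:SF} over one period equal to $L$. Linearly interpolating the rescaled periodic invariants and continuously adjusting the initial Gauss map to keep the period-$\ell_t$ holonomy exactly $L$ produces the desired path. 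The main obstacle will be maintaining admissibility (equivalently, keeping $k'$ nonvanishing so $Q$ is defined) and strict $L$-cocompactness along the interpolation. Admissibility is an open condition and may fail only on a codimension-one set, which one avoids by working in a tubular neighbourhood where the six real parameters of $PSL(2,\CC)$ give enough freedom to solve the holonomy constraint implicitly; since $n$ is constant along such a deformation, one stays in the correct component of the fibre over $L$.
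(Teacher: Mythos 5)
Your first two steps (reduction to $S=\operatorname{diag}(\lambda,\lambda^{-1})$, definition of $n$ via the total turning of the Euclidean tangent, and independence of the choice of $T$) match the paper's argument, but you omit one fact the paper establishes and uses: $k>0$ along $PTX$. This follows from $k(s+\ell)=r^{-2}k(s)$ together with admissibility (if $k(s)\leq 0$ then $k(s+\ell)=r^{-2}k(s)\leq k(s)$, contradicting monotonicity). It matters twice: it gives $2\pi n+2\theta>0$, i.e.\ $n\in\NN$ as the statement asserts rather than merely $n\in\ZZ$; and it makes the curve locally convex, which is the structural fact on which the paper's proof of the converse rests.

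The genuine gap is in your third step, the claim that two curves with the same $n$ are homotopic. Your plan---interpolate the periodic invariants and adjust the initial Gauss map to keep the holonomy equal to $L$---runs into an obstruction that the ``six real parameters of $PSL(2,\CC)$'' cannot remove. By the remark following \eqref{eq:SF}, a periodic $Q_t$ together with a choice of $G_t(0)$ reconstructs an admissible curve which is $L_t$-cocompact, where $L_t=G_t(0)^{-1}\Phi_t^{-1}G_t(0)$ and $\Phi_t$ is the monodromy of the linear system \eqref{eq:SF} over one period. Varying $G_t(0)$ moves $L_t$ only within its conjugacy class; it can never change the class itself. So to remain $L$-cocompact along the deformation you need the monodromy of the interpolated $Q_t$ to stay in the conjugacy class of $L^{-1}$, concretely $\mathrm{tr}\,\Phi_t=\pm(\lambda+\lambda^{-1})$, and this is a nonlinear constraint on $Q_t$ which linear interpolation has no reason to respect and which no choice of $G_t(0)$ can repair. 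Proving that the set of periodic $Q$'s satisfying this constraint (with fixed winding) is path-connected is essentially the original problem restated, so the implicit-function/tubular-neighbourhood remark does not close the argument. (Note also that the obstacle you do flag, loss of admissibility, is not an obstacle at all in $Q$-space: by that same remark every smooth $Q$ reconstructs to an admissible curve; the holonomy is the real constraint.) The paper's route avoids all of this by exploiting convexity: parametrise each curve by the normal angle $\phi\in[0,2n\pi+2\theta)$, represent it by its support function, and linearly interpolate the support functions. Cocompactness becomes a \emph{linear} condition on the support function, and admissibility is preserved because the radius of curvature is linear in the support function; both conditions are therefore stable under linear interpolation, and equality of $n$ is exactly what makes the two normal-angle parametrisations compatible. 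If you want to salvage your approach, you would need a separate argument that the trace-of-monodromy constraint defines a connected set, and nothing in your proposal supplies it.
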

\begin{proof}
 Note that the restriction $\theta\in[0,\pi)$ can be fulfilled since in $PSL(2,\CC)$ we can multiply $L$ by $-1$ without changing the corresponding M\"obius transformation.
    By Lemma \ref{lem:symmetry}, $TX(s+\ell) = STX(s) = \begin{pmatrix}
        \lambda&0\\0&\lambda^{-1}
    \end{pmatrix}TX(s)$, so that $PTX(s+\ell) = \lambda^2PTX(s)=r^2\E^{2i\theta}PTX(s)$ for each $s$.  In particular $k(s+\ell) = r^{-2}k(s)$.  Since $k$ is increasing for an admissible curve, we must have $k(s)>0$ for all $s$, so the curve $PTX(s)$ is locally convex.  Also $T(s+\ell) = \E^{2i\theta}T(s)$ for each $s$, where $T(s)$ is the Euclidean tangent vector to the curve $PTX$ at $s$.  It follows that $\int_0^\ell k\,du = 2\theta+2\pi n$ for some integer $n$, and since $k>0$ this implies $2\pi n+2\theta>0$.  Clearly this integer $n$ is an invariant of the homotopy class.  The curve $z(s) = (r\E^{i(\theta+n\pi)})^{2s/\ell}$ provides an example of an curve with invariant $n$, which has positive (increasing) curvature provided $\theta+n\pi>0$.
    
    It remains to show that any two curves with the same value of $n$ are homotopic to the loxodromic curve given above.  We can achieve this by first parametrising by the normal angle $\phi$, which we can assume (by first applying a rotation, which does not affect the $L$-cocompactness) that $\phi$ ranges from $0$ to $2n\pi+2\theta$. The curve is then determined by its support function $u(\phi)=\left\langle z(\phi),\E^{i\phi}\right\rangle$, and a homotopy can be produced by taking the linear interpolation between the support functions of the two curves:  The radius of curvature is linear in the support function, so the property that the radius of curvature is decreasing (i.e. the curvature is increasing) holds throughout this interpolation.
\end{proof}

We illustrate canonical curves for $n=0$, $1$ and $2$ in Figure \ref{figLoxN}.

\begin{corollary}\label{cor:lbd}
    If $X_0$ is an admissible $L$-co-compact curve, there exists $\bar\ell>0$ such that $\ell(X)\leq \bar\ell$ for any $L$-co-compact curve $X$ which is homotopic to $X_0$ through $L$-co-compact curves.
    Furthermore equality is attained on a loxodromic curve.
\end{corollary}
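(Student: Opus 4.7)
The plan is to normalise $L$ via Lemma~\ref{lem:symmetry}, pass to Euclidean coordinates using Remark~\ref{rmk:Euc-expr}, reparametrise by tangent angle, and extract the upper bound from a single Cauchy--Schwarz estimate whose saturation forces a logarithmic spiral --- a loxodromic curve.

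First I would apply Lemma~\ref{lem:symmetry} to conjugate $L$ by a M\"obius transformation $T$ into diagonal form with eigenvalue $\lambda = r\E^{i\theta}$, $0<r<1$, $\theta\in[0,\pi)$. Since the invariant length and admissibility are M\"obius invariant, it suffices to bound $\ell$ for the planar curve $Y := PTX:\RR\to\CC$. By the preceding corollary, $Y$ has strictly positive, increasing Euclidean curvature $k$, and the quasi-periodicity $Y(s+\ell)=\lambda^2 Y(s)$ advances the Euclidean tangent angle $\phi$ by $\Phi := 2n\pi + 2\theta$ over one period. Reparametrising by $\phi$ and writing $\rho = 1/k$, admissibility becomes $\rho' < 0$ (prime denotes $d/d\phi$) and quasi-periodicity becomes $\rho(\phi+\Phi) = r^2\rho(\phi)$.

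By Remark~\ref{rmk:Euc-expr}, $ds = \sqrt{dk/du}\,du$, and the substitutions $du=\rho\,d\phi$, $dk/du = -\rho'/\rho^3$ collapse this to $ds = \sqrt{-\rho'/\rho}\,d\phi$. Cauchy--Schwarz over one period then yields
\begin{equation*}
\ell \;=\; \int_0^\Phi \sqrt{-\rho'/\rho}\,d\phi \;\leq\; \sqrt{\Phi}\left(\int_0^\Phi (-\rho'/\rho)\,d\phi\right)^{1/2} = \sqrt{\Phi\,\log(\rho(0)/\rho(\Phi))} = \sqrt{2\Phi|\log r|} =: \bar\ell,
\end{equation*}
which depends only on $L$ and the homotopy invariant $n$, as required. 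Equality holds iff $-\rho'/\rho$ is constant, which integrates to $\rho(\phi) = \rho(0)\,\E^{-2|\log r|\phi/\Phi}$ and produces the logarithmic spiral $Y(\phi) = C\,\E^{(a+i)\phi}$ with $a = -2|\log r|/\Phi$.

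The one step that will require genuine care is verifying that this equality case is indeed a \emph{loxodromic curve} in the paper's sense, i.e.\ that it has constant inversive invariant $Q$. I expect to handle this either by direct substitution into the explicit formula for $Q$ from \eqref{eq:SF}, or, more cleanly, by recognising logarithmic spirals as the stereographic images of one-parameter subgroup orbits of $SL(2,\CC)$, which were already identified as loxodromic in the paragraph following \eqref{eq:SF}. No maximum-principle or compactness machinery enters, and the bound is sharp by construction.
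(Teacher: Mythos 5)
Your proposal is correct and follows essentially the same route as the paper: normalise $L$ via Lemma~\ref{lem:symmetry}, parametrise the projected curve $PTX$ by the turning angle $\phi\in[0,2n\pi+2\theta)$, write $\ell=\int_0^\Phi\sqrt{k_\phi/k}\,d\phi$ (your $\sqrt{-\rho'/\rho}$ is the same integrand), apply Cauchy--Schwarz using the quasi-periodicity $k(\phi+\Phi)=r^{-2}k(\phi)$, and note that equality forces $k$ to be exponential in $\phi$, i.e.\ a loxodromic curve. Incidentally, your constant $\sqrt{2\Phi\left|\log r\right|}$ is the correct evaluation of $\sqrt{\Phi\log\left(r^{-2}\right)}$; the paper's displayed value $\sqrt{(2n\pi+2\theta)\log\frac1r}$ appears to drop this factor of $2$.
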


\begin{proof}
As in the proof of the previous Corollary, we can parametrise a period of the curve $PTX$ by the normal angle $\phi\in[0,2n\pi+2\theta)$.  Then we compute, noting that $du = k^{-1}d\phi$ and $k_u = kk_\phi$
\begin{align*}
    \ell &= \int \sqrt{k_u}\,du\\
    &=\int_0^{2n\pi+2\theta} \frac{\sqrt{kk_\phi}}{k}\,d\phi\\
    &=\int_0^{2n\pi+2\theta} \sqrt{\frac{k_\phi}{k}}\,d\phi\\
    &\leq \sqrt{2n\pi+2\theta}\left(\int \partial_\phi\log k\,d\phi\right)^{1/2}\\
    &=\sqrt{(2n\pi+2\theta)\log\frac1r}\\
    &=:\bar\ell.
\end{align*}
Equality holds precisely when $k$ is an exponential function of $\phi$, which is the loxodromic case.
\end{proof}


\section{The evolution equations}\label{sec:evoleqn}

\subsection{The Inversive curve-lengthening flow}

Let $X_0:\RR\rightarrow\CC\PP^1$ be an admissible cocompact curve, for convenience parametrised by the invariant parameter so that $X_0(x+\ell)=LX_0(x)$ for all $x\in\RR$, for some $L\in PSL(2,\CC)$.  We consider the inversive length $\ell$ of a period of the curve.
For any smooth variation $X:\ [0,\varepsilon)\times \RR/\ell\ZZ$ which respects the periodicity, in the sense that $X(x+\ell,t)=LX(x,t)$ for all $x\in\RR$ and $t\in[0,\varepsilon)$, we define $V(x):=\frac{\partial X(x,t)}{\partial t}\Big|_{t=0}\in\Gamma(X_0^*T\CC\PP^1)$.  Then we can write $V(x) = f(x)\mathcal{N}(x)+\alpha(x)\mathcal{T}(x)$.  The variation equations of the previous section imply
$$
\partial_tds = \left(\frac12f^{(3)}+2Qf_s+Q_sf+\alpha_s\right)\,ds
$$
so that
$$
\frac{d}{dt}\ell = \int_0^\ell \left(\frac12f^{(3)}+2Qf_s+Q_sf+\alpha_s\right)\,ds
=-\int_0^\ell Q_s f\,ds
$$
since $\alpha(\ell)=\alpha(0)$ and $Q(\ell)=Q(0)$ by the periodicity.  Thus the direction of steepest ascent of $\ell$ in the $L^2(ds)$ sense corresponds to the case when $f = -Q_s$ and $\alpha=0$:
\begin{equation}\label{eq:ICLF}
\partial_tX = -Q_s\mathcal{N}.
\end{equation}

The flow \eqref{eq:ICLF} is called the \emph{inversive curvature flow} and is our
main object of study.
It is a sixth-order degenerate nonlinear singular system of parabolic
differential equations. From the variation equations of the previous section we deduce the following evolution equation for the fundamental invariant $Q$:
\begin{align}
    \partial_tQ &= \frac14Q^{(6)}+2QQ^{(4)}+\frac52Q_sQ^{(3)}+(4Q^2+2Q_{ss}+1)Q_{ss}+(2QQ_s+\frac12Q^{(3)})Q_s\notag\\
    &=\frac14Q^{(6)}+2QQ^{(4)}+3Q_sQ^{(3)}+2Q_{ss}^2+(4Q^2+1)Q_{ss}+2QQ_s^2.
    \label{eq:evolQ}
\end{align}


Evolution equations for all higher invariants may be computed from Equation \eqref{eq:evolQ} by successive differentiation, making using of the commutator expression
$$
[\partial_t,\partial_s]= \mathcal{C}\partial_s,\quad \mathcal{C} = \frac12Q^{(4)}+2QQ_{ss}+Q_s^2.
$$

\subsection{\texorpdfstring{$L^2$}{L2} bound on the invariant curvature}

\begin{proposition}
Let $x_0:\RR\rightarrow\CC\PP^1$ be a cocompact admissible curve.
Along the inversive curve lengthening flow $X:\RR\times[0,T)\rightarrow\CC\PP^1$ with initial data $X_0$ we have the estimate
\[
\vn{Q}_2^2 + \frac1{22}  \int_0^t\vn{Q_{s^3}}_2^2\,d\tau
           + \frac{5}{4}\int_0^t\vn{Q_sQ}_2^2   \,d\tau
           + 2           \int_0^t\vn{Q_s}_2^2    \,d\tau
\le \vn{Q}_2^2\Big|_{t=0}\,.
\]
\label{LMQbd}  
\end{proposition}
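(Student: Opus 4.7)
I would prove this as a differential energy inequality obtained from $\frac{d}{dt}\|Q\|_2^2$ via integration by parts, using the evolution equations for $Q$ and $ds$ derived in Section~\ref{sec:evoleqn} together with the periodicity coming from $L$-cocompactness (which makes all boundary terms over a period vanish).

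Since $\partial_t\,ds = -\mathcal{C}\,ds$ with $\mathcal{C} = \frac12 Q^{(4)} + 2QQ_{ss} + Q_s^2$, one has
\[
\frac{d}{dt}\int_0^\ell Q^2\,ds = \int_0^\ell\!\left(2Q\,\partial_tQ - Q^2\mathcal{C}\right)ds.
\]
The first step is to substitute the evolution equation \eqref{eq:evolQ} and the formula for $\mathcal{C}$ and integrate every term by parts over a period. The top-order term gives $\tfrac12\int QQ^{(6)} = -\tfrac12\int Q_{s^3}^2$. The mixed terms simplify cleanly: using that $\int Q_s^2 Q_{ss}\,ds = \tfrac13\int(Q_s^3)_s\,ds = 0$ by periodicity, one obtains $\int Q^2Q^{(4)} = 2\int QQ_{ss}^2$ and $\int QQ_sQ^{(3)} = -\int QQ_{ss}^2$, while $\int Q^3Q_{ss} = -3\int Q^2Q_s^2$ and $\int QQ_{ss} = -\int Q_s^2$. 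Collecting coefficients I expect
\[
\frac{d}{dt}\int Q^2\,ds = -\tfrac12\!\int Q_{s^3}^2\,ds + 5\!\int QQ_{ss}^2\,ds - 15\!\int Q^2Q_s^2\,ds - 2\!\int Q_s^2\,ds.
\]
The only bad term is $5\int QQ_{ss}^2$, and the proof reduces to absorbing it into the three good negative terms on the right.

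The main obstacle, and the source of the sharp constant $\tfrac{1}{22}$, is controlling this indefinite-sign term. The key observation is the integration-by-parts identity
\[
\int QQ_{ss}^2\,ds = -\int QQ_sQ_{s^3}\,ds,
\]
obtained by differentiating the product $QQ_{ss}$ against $Q_{ss}\,ds = d Q_s$ and again using the vanishing of $\int Q_s^2Q_{ss}$. Then Young's inequality $|ab|\le \tfrac{\varepsilon}{2}a^2 + \tfrac{1}{2\varepsilon}b^2$ with $a=QQ_s$, $b=Q_{s^3}$ and the sharp choice $\varepsilon = \tfrac{11}{2}$ yields
\[
\int QQ_{ss}^2\,ds \le \tfrac{1}{11}\!\int Q_{s^3}^2\,ds + \tfrac{11}{4}\!\int Q^2Q_s^2\,ds.
\]
Multiplying by $5$ and inserting into the identity above gives
\[
\frac{d}{dt}\int Q^2\,ds \le -\tfrac{1}{22}\!\int Q_{s^3}^2\,ds - \tfrac{5}{4}\!\int Q^2 Q_s^2\,ds - 2\!\int Q_s^2\,ds,
\]
so that each dissipative coefficient matches the proposition exactly.

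Finally, integrating this differential inequality from $0$ to $t$ and using nonnegativity of $\|Q\|_2^2(t)$ produces the stated estimate. The only subtle point worth mentioning is that the calculation is a priori for smooth data (which is why Proposition~\ref{LMQbd} is stated for the smooth flow, with the weakening to merely $L^2$ initial data deferred to the relaxation procedure in Section~\ref{sec:rough}), so periodicity genuinely justifies dropping every boundary term in the integrations by parts.
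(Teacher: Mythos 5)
Your argument is correct and takes essentially the same route as the paper: both compute $\frac{d}{dt}\int Q^2\,ds$ from \eqref{eq:evolQ} and $\partial_t ds=-\mathcal{C}\,ds$, reduce via the same integration-by-parts identities (your bad term $+5\int QQ_{ss}^2\,ds$ equals the paper's $-5\int QQ_sQ^{(3)}\,ds$ by exactly the identity you cite), and then absorb it with the same sharply tuned Young inequality, producing the coefficients $\tfrac1{22}$, $\tfrac54$ and $2$ before integrating in time. No gaps.
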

\begin{proof}
From the formula \eqref{eq:evolQ} and $\partial_tds = -\mathcal{C}ds$, we compute
\begin{align*}
\frac{d}{dt}\int Q^2 \,ds
 &=  \int 2Q\partial_tQ-\mathcal{C}Q^2\,ds\\
 &= 2\int Q\left(\frac14Q^{(6)}+2QQ^{(4)}+3Q_sQ^{(3)}+2Q_{ss}^2+(4Q^2+1)Q_{ss}+2QQ_s^2\right)\,ds\\
 &\quad\null -\int Q^2\left(\frac12Q^{(4)}+2QQ_{ss}+Q_s^2\right)\,ds,
 \end{align*}
 where the integrals are understood to be taken over one period of the curve.
 Using the identities
 \begin{align*}
 \int QQ^{(6)}\,ds&=-\int (Q^{(3)})^2\,ds;\\
 \int QQ_{ss}\,ds&=-\int Q_s^2\,ds;\\
 \int Q^2Q^{(4)}\,ds &= -2\int QQ_sQ^{(3)}\,ds;\\
 \int Q^3Q_{ss}\,ds &= -3\int Q^2Q_s^2\,ds;\quad\text{and}\\
 \int QQ_{ss}^2\,ds &= - \int QQ_sQ^{(3)}\,ds-\int Q_s^2Q_{ss}\,ds = - \int QQ_sQ^{(3)}\,ds,
  \end{align*}
we arrive at the expression
 \begin{equation*}
 \frac{d}{dt}\int Q^2 \,ds
 =
    -  \frac12\int\left(Q^{(3)}\right)^2    \,ds
    -  15     \int Q^2Q_s^2     \,ds
    -  2      \int Q_{ss}^2        \,ds
    -  5      \int Q^{(3)}Q_sQ  \,ds
\,.
\end{equation*}
Young's inequality gives us the estimate
\[
    -5                        \int Q^{(3)}Q_sQ  \,ds
\le \frac{5}{11}             \int\left(Q^{(3)}\right)^2    \,ds
  + \frac{5}{\frac{4}{11}}   \int Q_s^2Q^2     \,ds
=   \frac{5}{11}             \int\left(Q^{(3)}\right)^2    \,ds
  + \Big(15-\frac{5}{4}\Big)\int Q_s^2Q^2     \,ds
\,.
\]
Combining this with the previous identity, we conclude
\[
\frac{d}{dt}\vn{Q}_2^2 \le -\frac1{22}\vn{Q_{s^3}}_2^2 - \frac{5}{4}\vn{Q_sQ}_2^2 - 2\vn{Q_s}_2^2
\,.
\]
Integrating this equation gives the stated estimate.
\end{proof}

\subsection{Bounds on higher derivatives}
\begin{proposition}\label{prop:derivbounds}
Let $X_0:\RR\rightarrow\CC\PP^1$ be an $L$-cocompact admissible curve.  Then for each $p\geq 1$ there exists $c$ depending on $L$, $p$, the homotopy class of $X_0$ in the space of $L$-cocompact curves, $\ell(X_0)$ and $\int Q^2\,ds\Big|_{t=0}$ such that along the inversive curve lengthening flow $X:\RR\times[0,T)\rightarrow\CC\PP^1$ with initial data $X_0$ we have
\[
	\int \left(Q^{(p)}\right)^2\,ds \leq c\min\left\{\int \left(Q^{(p)}\right)^2\,ds\Big|_{t=0}, 1+t^{-\frac{p}{3}}
    \right\}.
\]
\label{LMQsbd}
\end{proposition}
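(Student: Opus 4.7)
The plan is an energy method on $E_p(t):=\int(Q^{(p)})^2\,ds$, paralleling the $p=0$ analysis of Proposition \ref{LMQbd}. The target is a differential inequality of the form $\tfrac{d}{dt}E_p\le -cE_p^{1+3/p}+C$, from which both bounds in the minimum can be extracted by ODE comparison.

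First, I would derive the evolution equation for $E_p$. Iterating the commutator $[\partial_t,\partial_s]=\mathcal{C}\partial_s$ (with $\mathcal{C}=\tfrac12 Q^{(4)}+2QQ_{ss}+Q_s^2$) and differentiating \eqref{eq:evolQ} yields $\partial_tQ^{(p)}=\tfrac14 Q^{(p+6)}+R_p$, where $R_p$ is a polynomial in $Q,Q_s,\ldots,Q^{(p+5)}$. A careful induction on $p$ should confirm that the highest derivative in any single factor of any summand of $R_p$ does not exceed $p+3$. Combined with $\partial_tds=-\mathcal{C}\,ds$ and three integrations by parts on the leading term, this gives
\[
\frac{d}{dt}E_p=-\tfrac12 E_{p+3}+\Phi_p,
\]
where $\Phi_p$ is a finite sum of integrals of polynomials in $Q$ and its derivatives, each factor of order at most $p+3$.

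Next, I would absorb $\Phi_p$ into the leading term by interpolation. The period $\ell$ lies in the fixed interval $[\ell(X_0),\bar\ell]$ (the lower bound from the steepest-ascent character of the flow, the upper from Corollary \ref{cor:lbd}), so Gagliardo--Nirenberg interpolation on the circle of circumference $\ell$ applies with constants depending only on the allowed data. Combining with the uniform bound $E_0(t)\le E_0(0)$ from Proposition \ref{LMQbd} and standard applications of H\"older and Young, each summand of $\Phi_p$ is bounded by $\tfrac{1}{4p}E_{p+3}+C$, yielding
\[
\frac{d}{dt}E_p\le-\tfrac14 E_{p+3}+C.
\]
A further GN interpolation gives $E_p\le C E_{p+3}^{p/(p+3)}+C$, hence $E_{p+3}\ge cE_p^{1+3/p}-C$, producing the target inequality. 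Comparison with $y'=-\tfrac{c}{2}y^{1+3/p}$ (valid once $y\ge (2C/c)^{p/(p+3)}$) gives the smoothing bound $E_p(t)\le c'(1+t^{-p/3})$, independent of $E_p(0)$. For the data bound, I would refine the absorption to obtain $\tfrac{d}{dt}E_p\le CE_p-\tfrac14 E_{p+3}$ (the $C$ replaced by a multiple of $E_p$ after more careful interpolation), and Gr\"onwall combined with the time-integrated bounds on $E_1$ and $E_3$ from Proposition \ref{LMQbd} gives $E_p(t)\le cE_p(0)$ on a fixed interval $[0,T_0]$; the smoothing bound extends this to all later times.

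The main obstacle will be the combinatorial bookkeeping required to verify the structural claim that every summand of $R_p$ has all factors of derivative order at most $p+3$. Iterating the commutator and differentiating the sixth-order equation \eqref{eq:evolQ} a total of $p$ times produces a large collection of nonlinear terms, and careful induction is needed to exclude factors of order greater than $p+3$ that would break the absorption scheme. Once this scaling fact is established, the remaining steps (H\"older, GN, Young, and ODE comparison) are routine.
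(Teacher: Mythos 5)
Your overall route is the same as the paper's: derive the structure of $\partial_tQ^{(p)}$ by induction via the commutator, integrate by parts to extract the dissipative term $-\tfrac12\int(Q^{(p+3)})^2\,ds$, absorb the nonlinearities by Gagliardo--Nirenberg and Young using the uniform length bounds and $\vn{Q}_2^2\le\vn{Q}_2^2\big|_{t=0}$, interpolate once more to close an ODE of the form $\tfrac{d}{dt}E_p\le -C_1E_p^{1+3/p}+C_2$, and finish by comparison. The problem is that the structural claim on which your absorption step rests is wrong as stated, and not in a way that mere bookkeeping repairs.

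First, it is false that every factor in $R_p$ has order at most $p+3$: differentiating the term $2QQ^{(4)}$ of \eqref{eq:evolQ} $p$ times produces $2QQ^{(p+4)}$, so factors of order $p+4$ occur for every $p$ (already at $p=0$). What the induction actually yields is the paper's Lemma \ref{lem:evolQp}: besides $\tfrac14Q^{(p+6)}+Q^{(p+2)}$, the nonlinearity consists of quadratic terms $Q^{(i)}Q^{(j)}$ with $i+j=p+4$ and cubic terms $Q^{(i)}Q^{(j)}Q^{(k)}$ with $i+j+k=p+2$; the over-order factors $Q^{(p+3)},Q^{(p+4)}$ are then removed by integrating by parts inside $\tfrac{d}{dt}E_p$ (Lemma \ref{lem:evol-intQp}), leaving cubic integrands with $i+j+k=2p+4$, $i,j,k\le p+2$, and quartic integrands $Q^{(i)}Q^{(j)}Q^{(k)}Q^{(p)}$ with $i+j+k=p+2$. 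Second --- and this is the genuine gap --- a bound on the maximal order of the factors in $\Phi_p$ is not sufficient to conclude that each summand is $\le\tfrac1{4p}E_{p+3}+C$. The homogeneity (total derivative count) of each summand is essential. Your hypothesis admits, for example, the term $\int|Q|\,(Q^{(p+3)})^2\,ds$, all of whose factors have order $\le p+3$, yet which cannot be bounded by $\varepsilon E_{p+3}+C(\varepsilon,\vn{Q}_2,\ell)$: adding to $Q$ a bump of height $h$ and width $w$ with $h^2w$ fixed leaves $\vn{Q}_2$ and $\ell$ essentially unchanged while making this term of size comparable to $h\,E_{p+3}$ with $h$ arbitrarily large. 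In the language of the paper's estimate, Gagliardo--Nirenberg gives for a cubic integrand a total exponent $\frac{i+j+k+1/2}{p+5/2}$ on $\vn{Q^{(p+3)}}_2$, and Young's inequality needs this to be strictly less than $2$; this holds exactly because $i+j+k=2p+4$, and fails for $i+j+k=2p+6$ as in the term above. So the ``scaling fact'' your induction must establish is the homogeneity constraint of Lemma \ref{lem:evolQp} (together with the integration-by-parts reduction of top-order factors), not the maximal-order bound you state; once it is replaced, your argument becomes the paper's.

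A final remark: your separate Gr\"onwall treatment of the first bound in the minimum deviates from the paper, which deduces everything from the single ODE; since that ODE alone only gives $E_p(t)\le\max\{E_p(0),C\}$ rather than $cE_p(0)$, some extra argument of the kind you sketch is indeed needed for the literal multiplicative form when $E_p(0)$ is small, and this is a point on which your proposal is more careful than the paper's own (terse) conclusion.
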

\begin{proof}
We begin by deriving the form of the evolution equation for derivatives of $Q$:

\begin{lemma}\label{lem:evolQp}
Under the inversive curve lengthening flow \eqref{eq:ICLF}, for any $p\geq 0$ we have
\begin{equation}\label{eq:evol-Qp}
\partial_tQ^{(p)} = \frac14 Q^{(p+6)}+Q^{(p+2)}+\sum_{i+j=p+4}c_{p,i,j}Q^{(i)}Q^{(j)}
+\sum_{i+j+k=p+2}c_{p,i,j,k}Q^{(i)}Q^{(j)}Q^{(k)}
\end{equation}
for some constants $c_{p,i,j}$ and $c_{p,i,j,k}$.
\end{lemma}

\begin{proof}
    We proceed by induction on $p$, noting that the Equation \eqref{eq:evolQ} has the form of Equation \eqref{eq:evol-Qp} for $p=0$.  The induction step proceeds as follows:  Assuming the form given in the Lemma holds for some $p$, we have
    \begin{align*}
\partial_tQ^{(p+1)}&=\partial_s\partial_tQ^{(p)}+\mathcal{C}\partial_sQ^{(p)}\\
    &=\partial_s\left(\frac14 Q^{(p+6)}+Q^{(p+2)}+\sum_{i+j=p+4}c_{p,i,j}Q^{(i)}Q^{(j)}
+\sum_{i+j+k=p+2}c_{p,i,j,k}Q^{(i)}Q^{(j)}Q^{(k)}
    \right)\\
    &\quad\null+\left(\frac14Q^{(4)}+2QQ_{ss}+Q_s^2\right)Q^{(p+1)}\\
&=\frac14Q^{(p+1+6)}+Q^{(p+1+2)}+\sum_{i+j=p+4}c_{p,i,j}\left(Q^{(i+1)}Q^{(j)}+Q^{(i)}Q^{(j+1)}\right)\\
&\quad\null+\sum_{i+j+k=p+2}c_{p,i,j,k}\left(Q^{(i+1)}Q^{(j)}Q^{(k)}+Q^{(i)}Q^{(j+1)}Q^{(k)}+Q^{(i)}Q^{(j)}Q^{(k+1)}
\right)\\
&\quad\null+\frac14Q^{(4)}Q^{(p+1)}+2QQ_{ss}Q^{(p+1)}+Q_s^2Q^{(p+1)}.
    \end{align*}
This completes the proof since the quadratic and cubic terms all have the form required for the $p+1$ case of Equation \eqref{eq:evol-Qp}.
\end{proof}

The result of Lemma \ref{lem:evolQp} allows us to compute evolution equations integrals of higher derivatives:

\begin{lemma}\label{lem:evol-intQp}
Under the inversive curve lengthening flow \eqref{eq:ICLF} of an admissible co-compact curve,
\begin{align}
\frac{d}{dt}\int \left(Q^{(p)}\right)^2\,ds &\leq -\frac12\int \left(Q^{(p+3)}\right)^2\,ds -2\int \left(Q^{(p+1)}\right)^2\,ds\notag\\
&\quad\null+C\!\!\!\!\!\sum_{\substack{i+j+k = 2p+4\\0\leq i,j,k\leq p+2}}\int\left|Q^{(i)}Q^{(j)}Q^{(k)}\right|\,ds
+C\!\!\!\!\!\sum_{i+j+k = p+2}
\int\left|Q^{(i)}Q^{(j)}Q^{(k)}Q^{(p)}\right|\,ds,\label{eq:ddtQp}
\end{align}
for some $C$ depending only on $p$.
\end{lemma}
\begin{proof}
From Lemma \ref{lem:evolQp} and $\partial_tds = -\mathcal{C}\,ds$ we derive
\begin{align*}
    \frac{d}{dt}\int\left(Q^{(p)}\right)^2\,ds &=\int 2Q^{(p)}\partial_tQ^{(p)}-\mathcal{C}\left(Q^{(p)}\right)^2\,ds\\
    &=-\frac12\int\left(Q^{(p+3)}\right)^2\,ds-2\int\left(Q^{(p+1)}\right)^2\,ds\\
    &\quad\null+\sum_{i+j=p+4}\tilde c_{p,i,j}\int Q^{(p)}Q^{(i)}Q^{(j)}\,ds
    +\sum_{i+j+k=p+2}\tilde c_{p,i,j,k}\int Q^{(p)}Q^{(i)}Q^{(j)}Q^{(k)}\,ds.
\end{align*}
The last term only involves derivatives of $Q$ up to order $p+2$, so can be bounded by the last term of \eqref{eq:ddtQp}.  The penultimate term is also of the correct form except that it may contain terms of the form $\int Q^{(p)}QQ^{(p+4)}$ or $\int Q^{(p)}Q_sQ^{(p+3)}\,ds$.  These can be integrated by parts to produce terms of the form $\int Q(Q^{(p+2)})^2\,ds$, $\int Q_sQ^{(p+1)}Q^{(p+2)}\,ds$ and $\int Q^{(p)}Q_ssQ^{(p+2)}$, all of which can be estimated by the penultimate term in \eqref{eq:ddtQp}.
\end{proof}

Now we complete the proof of Proposition \ref{prop:derivbounds}.  The strategy is to apply Gagliardo-Nirenberg inequalities to control the difficult nonlinear terms in terms of the good leading term, in a manner inspired by the work of Polden \cite{Polden}.  This method was also successfully applied to the analysis of the affine curve lengthening flow in \cite{ACLF}.  A convenient exposition of these inequalities and their application is given in \cite{DKS01}.
In the present situation, since the length $\ell$ is bounded above and below along the evolution, we have uniform control on the constants in the Gagliardo-Nirenberg interpolation inequality.  We estimate the terms on the second line of \eqref{eq:ddtQp}:
\begin{align*}
\int |Q^{(i)}Q^{(j)}Q^{(k)}|ds&\leq \|Q^{(i)}\|_{L^3}\|Q^{(j)}\|_{L^3}\|Q^{(k)}\|_{L^3}\\
&\leq C\|Q\|_{L^2}^{\frac{3p+7-i-j-k}{p+5/2}}\|Q^{(p+3)}\|^{\frac{i+j+k+1/2}{p+5/2}}\\
&\leq C\|Q^{(p+3)}\|_{L^2}^{\frac{4p+9}{2p+5}}\\
&\leq \varepsilon\int\left(Q^{(p+3)}\right)^2\,ds + C(\varepsilon)
\end{align*}
for any $\varepsilon>0$, where we used Young's inequality in the last step, noting that the exponent $\frac{4p+9}{2p+5}$ is strictly less than $2$.  Similarly we have
\begin{align*}
\int |Q^{(i)}Q^{(j)}G^{(k)}Q^{(p)}|\,ds &\leq C\left(\int Q^2\,ds\right)^{\frac{2p+9}{2p+6}}\left(\int\left(Q^{(p+3)}\right)^2\,ds\right)^{\frac{2p+3}{2p+6}}\\
&\leq \varepsilon\int\left(Q^{(p+3)}\right)^2\,ds+C(\varepsilon),
\end{align*}
again using the observation that $\frac{2p+3}{2p+6}<1$.  This gives
$$
\frac{d}{dt}\int\left(Q^{(p)}\right)^2\,ds\leq -\frac14\int\left(Q^{(p+3)}\right)^2\,ds+C.
$$
The Gagliardo-Nirenberg inequality again provides that
$$
\int\left(Q^{(p)}\right)^2\,ds \leq C\left(\int Q^2\,ds\right)^\frac{3}{p+3}\left(\int\left(Q^{(p+3)}\right)^2\,ds\right)^\frac{p}{p+3},
$$
implying that
$$
\frac{d}{dt}\int\left(Q^{(p)}\right)^2\,ds \leq -C_1\left(\int\left(Q^{(p)}\right)^2\,ds\right)^{\frac{p+3}{p}}+C_2,
$$
from which we conclude the bound in the Proposition.\end{proof}

\subsection{Long time existence}

\begin{proposition}\label{prop:LTE}
Let $X_0:\RR\rightarrow\CC\PP^1$ be an $L$-cocompact admissible curve.
The inverse curve lengthening flow $X:\RR\times[0,T)\rightarrow\CC\PP^1$ with
initial data $X_0$ exists for all time.
\label{PRglobal}
\end{proposition}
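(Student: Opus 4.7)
The plan is a standard continuation argument combining short-time existence for smooth admissible $L$-cocompact initial data with the a priori estimates of Propositions \ref{LMQbd} and \ref{prop:derivbounds}. Short-time existence follows from viewing \eqref{eq:ICLF} in a fixed Euclidean chart as a quasilinear parabolic system of order six; admissibility $k'>0$ gives nondegeneracy of the principal symbol. This yields a unique smooth $L$-cocompact solution on some maximal interval $[0,T^*)$ with $T^*>0$, and the aim is to prove $T^*=\infty$.

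Suppose for contradiction that $T^*<\infty$. The monotonicity $\tfrac{d}{dt}\ell=\int Q_s^2\,ds\geq 0$ from the computation preceding \eqref{eq:ICLF}, combined with the upper bound $\ell\leq\bar\ell$ from Corollary \ref{cor:lbd}, gives a uniform two-sided bound $\ell(X_0)\leq \ell(t)\leq \bar\ell$ on $[0,T^*)$. Proposition \ref{LMQbd} then supplies a uniform $L^2$ bound on $Q$, and Proposition \ref{prop:derivbounds} furnishes, for each $p\geq 1$, a uniform bound on $\|Q^{(p)}\|_{L^2}$ on $[T^*/2,T^*)$. Periodic Sobolev embedding on a domain of uniformly bounded length promotes these to uniform $C^k$ bounds on $Q$ for every $k$. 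The evolution $\partial_tG=TG$, in which $T$ is polynomial in $Q$ and its $s$-derivatives up to order four, then yields uniform $C^k$-in-space and Lipschitz-in-time control on the Gauss map $G$, hence on $X=G^{-1}([0:1])$, on $[0,T^*)$.

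Arzel\`a--Ascoli therefore produces a smooth limit $(Q_{T^*},G_{T^*},X_{T^*})$ as $t\to T^*$, with $G_{T^*}(s)\in PSL(2,\CC)$ for each $s$ and $Q_{T^*}$ smooth and $\ell(T^*)$-periodic. The limit curve $X_{T^*}$ is automatically admissible: the inversive tangent $\mathcal{T}$ of Section \ref{subsec:InvParam} is reconstructed as the $G_{T^*}^{-1}$-image of the standard tangent, which is nonzero because $G_{T^*}$ is valued in $PSL(2,\CC)$; this forces $k'=|\mathcal{T}|^{-2}>0$ along the limit. The cocompactness symmetry is preserved because the flow is invariant under M\"obius transformations, so $X_{T^*}(s+\ell(T^*))=LX_{T^*}(s)$. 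Applying short-time existence at $t=T^*$ with $X_{T^*}$ as new initial data extends the solution past $T^*$, contradicting maximality.

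The principal obstacle is the possibility that admissibility degenerates in the limit, since \eqref{eq:ICLF} is only parabolic on the admissible locus. This is ruled out cleanly because the entire analysis is framed in terms of the invariant $Q$: uniform higher-order control of $Q$ is control of a smooth Gauss map $G\in PSL(2,\CC)$, and the Serret--Frenet reconstruction automatically produces a curve with $k'>0$. The invariance of the flow under the M\"obius group is precisely what allows the potentially dangerous higher-order degenerate nonlinearities to be tamed by estimates in a single invariant quantity.
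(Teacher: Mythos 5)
Your proposal follows the same overall architecture as the paper's proof: argue by contradiction at a finite maximal time $T$, use Proposition \ref{prop:derivbounds} to get uniform bounds on $\vn{Q^{(p)}}_2$ (hence on $\vn{Q^{(p)}}_\infty$), extract a smooth admissible $L$-cocompact limit curve at $t=T$, and restart via short-time existence. However, there is a genuine gap at exactly the point where the paper does its real work. All of your estimates live in the evolving invariant parameter $s$: the bounds on $Q^{(p)}$, and the spatial control of $G$ coming from the Serret--Frenet equation \eqref{eq:SF}, are with respect to $s$, whereas the time derivative \eqref{eq:dGdt} is taken at a fixed material point, and $[\partial_t,\partial_s]=\mathcal{C}\partial_s\neq 0$. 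Consequently, ``uniform $C^k$-in-space and Lipschitz-in-time control on $G$, hence on $X$'' does not follow by simply juxtaposing the two, and Arzel\`a--Ascoli cannot be applied to the family of maps $X(\cdot,t)$ until one controls the change of variables between $s$ and a fixed parameter $u$. The paper devotes an entire inner lemma to precisely this: from $\partial_t\frac{\partial s}{\partial u}=-\mathcal{C}\frac{\partial s}{\partial u}$, the bound $\vn{\mathcal{C}}_\infty<\infty$, and (crucially) the finiteness of $T$, one gets $\exp\left(-T\vn{\mathcal{C}}_\infty\right)\leq\frac{\partial s}{\partial u}\leq\exp\left(T\vn{\mathcal{C}}_\infty\right)$, and then, by induction, bounds on all $\frac{\partial^p s}{\partial u^p}$; only with these in hand do \eqref{eq:SF} and \eqref{eq:dGdt} yield bounds on the mixed derivatives $\partial_t^q\partial_u^p G$ and smooth convergence $X(\cdot,t)\to X(\cdot,T)$ in a fixed parametrization. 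This is also what the restart step requires: the flow \eqref{eq:ICLF} is purely normal, so to conclude that the short-time solution issued from $X(\cdot,T)$ genuinely continues the original solution (contradicting maximality), you need convergence of the parametrized maps, not merely of the image curves.

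The missing parametrization control also makes your admissibility argument circular. Writing $k'=|\mathcal{T}|^{-2}$ with $\mathcal{T}$ the $(G_T^{-1})_*$-image of the standard tangent presupposes that the limit of the Gauss maps is the inversive Gauss map of the limit curve, i.e.\ that the normal form of Proposition \ref{prop:inv-norm-form} survives the limit; this is exactly what could fail if the invariant length element $\frac{\partial s}{\partial u}$ degenerates (equivalently, $k'\to 0$ somewhere) as $t\to T$. Membership of $G_T$ in $PSL(2,\CC)$ alone does not exclude this; what excludes it is the uniform lower bound on $\frac{\partial s}{\partial u}$ above. (Alternatively, one could argue that the coefficient matrix in \eqref{eq:dGdt} is bounded by the $Q^{(p)}$ estimates, so $\|G\|$ and $\|G^{-1}\|$ stay bounded on a finite time interval at each fixed $u$, and then the explicit formula \eqref{eq:G2}, whose diagonal entries are $(k')^{1/4}$ and $(k')^{-1/4}$, pins $k'$ above and away from zero; but some such argument must be supplied --- it is not automatic.) The remaining ingredients of your proposal --- the two-sided length bound via Corollary \ref{cor:lbd}, preservation of cocompactness by M\"obius invariance, and the short-time existence sketch (which the paper itself invokes without proof) --- match the paper and are fine.
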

\begin{proof}
Suppose the maximal time of existence $T$ is finite.
By Proposition \ref{prop:derivbounds} we have $\vn{Q^{(p)}}_2^2$ bounded uniformly for all $p\geq 0$,
which implies that $\vn{Q^{(p)}}_\infty$ is bounded uniformly for all $p\geq 0$.

\begin{lemma}
For each $p,q\geq 0$, $\|\partial_t^q\partial_u^pG\|_\infty$ is bounded on $[0,T)$, where $u$ is the parameter on $\RR$.
\end{lemma}

\begin{proof}
    Fix $u_0\in\RR$, and define a global arc length parameter by $s(u,t) = \int_{u_0}^uds$.  Then we have $s(u_0,t)=0$ for all $t$, and $\partial_t\frac{\partial s}{\partial u} = -\mathcal{C}\frac{\partial s}{\partial u}$, and it follows that $\frac1C\leq \frac{\partial s}{\partial u}\leq C$ on $\RR\times[0,T)$, where $C = \exp(T\|\mathcal{C}\|_\infty)$.  Differentiating successively with respect to $u$, we further derive by induction on $p$ the identity
    $$
    \partial_t\frac{\partial^ps}{\partial u^p} = \sum_{q=0}^{p-1}\mathcal{C}^{(q)}\sum_{\substack{i+1+\ldots+i_{q+1}=p\\i_1,\ldots,i_{q+1}\geq 1}}c_{q,i_1,\ldots,i_{q+1}}\frac{\partial^{i_1}s}{\partial u^{i_1}}\ldots\frac{\partial^{i_{q+1}}s}{\partial u^{i_{q+1}}}
    $$
    for some constants $c_{q,i_1,\ldots,i_{q+1}}$.  From this and the bounds on $Q^{(p)}$ for all $p$ (hence on $\mathcal{C}^{(p)}$ for all $p$) from Proposition \ref{prop:derivbounds} we deduce bounds on $\frac{\partial^ps}{\partial u^p}$ for each $p$ on $[0,\ell]\times[0,T]$.  It follows that $s$ is a smooth function of $u$, and vice versa.

    The bounds on $Q^{(p)}$ and the identities \eqref{eq:SF} and \eqref{eq:dGdt}, together with the bounds on $\frac{\partial^ps}{\partial u^p}$ derived above, imply the required bounds on $\partial_t^p\partial_u^qG$ for any $p,q\geq 0$. 
\end{proof}

It follows from the Lemma that $G(\cdot,t)$ converges in $C^\infty$ to a smooth limit $G(.,T)$ as $t\to T$, and in particular $X(.,t) = G^{-1}\left(\begin{bmatrix}0\\1\end{bmatrix}\right)$ also converges smoothly to a limit $X(\cdot,T)$ which is also $L$-co-compact and admissible.  Applying the short time existence result with initial condition $X(.,T)$ then extends the solution smoothly to a longer time interval, contradicting the maximality of $T$.  Therefore the interval of existence is infinite.
\end{proof}

\subsection{Exponential convergence}

In this section we prove that the solution of \eqref{eq:ICLF} converges smoothly to a loxodromic curve as $t\to\infty$, at an exponential rate.  We first note that $\|Q_s\|_{L^2}$ must approach zero along a sequence of times:

\begin{lemma}
Along the ICLF we have
\[
\int_0^\infty\int Q_s^2\,ds\,dt < \bar\ell
\]
\label{LMQsinL1}
where $\bar\ell$ is the constant derived in Corollary \ref{cor:lbd}.
\end{lemma}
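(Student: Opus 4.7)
The plan is to show this essentially by the fundamental theorem of calculus applied to $\ell(t)$. Recall that in the derivation of the flow \eqref{eq:ICLF} we computed $\frac{d}{dt}\ell = -\int_0^\ell Q_s f\,ds$ for a general normal variation with speed $f$. Since the ICLF corresponds to the steepest ascent choice $f = -Q_s$, along the flow we have
\[
\frac{d}{dt}\ell(t) = \int_0^{\ell(t)} Q_s^2\,ds.
\]
In particular $\ell$ is monotone non-decreasing. Integrating in time from $0$ to any $T<\infty$ gives
\[
\int_0^T\int Q_s^2\,ds\,dt = \ell(T) - \ell(0).
\]

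Next, I would invoke Corollary \ref{cor:lbd} to bound the right-hand side. For this to apply I need to know that the solution $X(\cdot,t)$ remains $L$-cocompact and stays in the same homotopy class as $X_0$ throughout the evolution. The first of these is immediate from the M\"obius invariance of the flow (so $L$-cocompactness is preserved), and the second follows from the smoothness of the solution on $[0,T)$ established in Proposition \ref{prop:LTE}, since the homotopy class is a discrete invariant which cannot jump along a continuous family of admissible $L$-cocompact curves. Hence $\ell(T) \leq \bar\ell$ for every $T \geq 0$, and since $\ell(0) > 0$ (the initial curve is admissible and cocompact, so has strictly positive invariant length over one period), we obtain
\[
\int_0^T \int Q_s^2\,ds\,dt \leq \bar\ell - \ell(0) < \bar\ell.
\]
Taking $T\to\infty$ yields the stated estimate.

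There is essentially no obstacle here beyond confirming the two subsidiary points above: (i) that the variational formula for $\frac{d}{dt}\ell$ from Section \ref{sec:evoleqn} reduces cleanly to $\int Q_s^2\,ds$ when $f = -Q_s$ and $\alpha = 0$ (this is immediate), and (ii) that the upper bound $\bar\ell$ from Corollary \ref{cor:lbd} is applicable at every time along the flow. The only mildly subtle point is the preservation of the homotopy class, but since admissibility is preserved (as stated in the introduction and used throughout) and the flow is smooth on $[0,T)$ for any $T < \infty$, continuity of the integer-valued invariant $n$ forces it to remain constant.
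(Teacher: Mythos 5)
Your proposal is correct and follows essentially the same route as the paper's own proof: integrate $\frac{d}{dt}\ell = \int Q_s^2\,ds$ in time, bound $\ell(t)\leq\bar\ell$ via Corollary \ref{cor:lbd}, and use $\ell(0)>0$ to obtain the strict inequality. Your additional verification that $L$-cocompactness and the homotopy class are preserved along the flow is a detail the paper leaves implicit, but it is not a different argument.
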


\begin{proof}
Since $\frac{d\ell}{dt} = \int Q_s^2\,ds$, we have for any $t\in(0,\infty)$ that 
$\bar\ell \ge \ell(t) \ge \ell(t) - \ell(0) =
\int_0^t\int Q_s^2\,ds\,d\hat t$.
The result follows on taking $t\to\infty$.
\end{proof}

It follows that there exists a sequence $\{t_j\}$ approaching infinity
such that $\vn{Q_s}_{L^2}(t_j)$ converges to zero.
We exploit this to deduce exponential convergence:

\begin{lemma}\label{lem:Qs-exp-decay}
Let $X_0:\RR\rightarrow\CC\PP^1$ be an admissible $L$-cocompact curve. Then
the solution $X:\ \RR\times[0,\infty)\to\CC\PP^1$ of the inversive curve lengthening flow \eqref{eq:ICLF} satisfies
	\[
		\int Q_s^2\,ds \le Ce^{-\lambda t}
	\]
	for some positive constants $C$ and $\lambda$ depending only on $\ell(X_0)$, $\bar\ell$ and $\int Q^2\,ds\Big|_{t=0}$.
\end{lemma}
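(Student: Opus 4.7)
The plan is to couple an energy inequality for $\|Q_s\|_2^2$ having a strictly negative leading term with the $L^1$-in-time bound from Lemma \ref{LMQsinL1}, and thereby extract exponential decay via a differential inequality once $\|Q_s\|_2^2$ becomes sufficiently small.

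First I would compute $\frac{d}{dt}\|Q_s\|_2^2$ using the commutator $\partial_t\partial_s Q = \partial_s\partial_t Q + \mathcal{C}Q_s$, integration by parts, and the evolution \eqref{eq:evolQ}, in the style of Lemma \ref{lem:evol-intQp} specialised to $p=1$. Collecting the leading terms yields the principal dissipation $-\tfrac12\|Q^{(4)}\|_2^2 - 2\|Q_{ss}\|_2^2$, augmented by the additional nonnegative weighted dissipation $-8\int Q^2 Q_{ss}^2\,ds$ that emerges from the $(4Q^2{+}1)Q_{ss}$ contribution. The remaining cubic and quartic nonlinearities in derivatives of $Q$ of order at most $4$ would then be estimated via Gagliardo--Nirenberg interpolation together with the uniform $L^\infty$ bounds on all $Q^{(p)}$ supplied by Proposition \ref{prop:derivbounds}; the interpolation constants are controlled because $\ell$ remains in a bounded interval by Corollary \ref{cor:lbd}. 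Since $\ell$-periodicity forces every derivative $Q^{(j)}$ with $j\ge 1$ to have zero mean on a period, the Poincar\'e--Wirtinger inequality gives $\|Q_s\|_2^2 \le C_P\|Q_{ss}\|_2^2$ and converts the dissipation into a coercive $-\lambda_0\|Q_s\|_2^2$ term. The target output of this stage is the ODE inequality
\[
\frac{d}{dt}\|Q_s\|_2^2 \le -\lambda_0 \|Q_s\|_2^2 + C_1 \|Q_s\|_2^{2+\epsilon_0},
\]
with $\lambda_0, \epsilon_0 > 0$ and $C_1$ depending only on the data allowed in the statement. Finally, Lemma \ref{LMQsinL1} combined with the uniform boundedness of $\frac{d}{dt}\|Q_s\|_2^2$ (again from Proposition \ref{prop:derivbounds}) yields $\|Q_s\|_2^2 \to 0$ as $t\to\infty$; choosing $T_0$ with $\|Q_s\|_2^2(T_0)\le\delta_0$ for $\delta_0$ so small that $C_1 \delta_0^{\epsilon_0/2} \le \lambda_0/2$, the set $\{\|Q_s\|_2^2 \le \delta_0\}$ becomes self-preserving for $t\ge T_0$ and the inequality collapses to $\frac{d}{dt}\|Q_s\|_2^2 \le -(\lambda_0/2)\|Q_s\|_2^2$, producing $\|Q_s\|_2^2(t) \le \delta_0\,\mathrm{e}^{-(\lambda_0/2)(t-T_0)}$; enlarging $C$ to cover the bounded interval $[0,T_0]$ completes the estimate.

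The main obstacle will be the super-quadratic exponent $\epsilon_0>0$ in the nonlinear bound: terms whose factors are all mean-zero derivatives of $Q$ can be controlled by $\|Q_s\|_2^{2+\epsilon_0}$ through interpolation between $\|Q_s\|_2$ and the uniform bounds on high derivatives, but terms containing an undifferentiated $Q$-factor---whose $L^\infty$ norm is bounded but not small---require carefully chosen integrations by parts to trade the naked $Q$ against the weighted dissipation $-8\int Q^2 Q_{ss}^2\,ds$ (or the unweighted $-2\|Q_{ss}\|_2^2$) already recovered from the leading terms. Ensuring that, after these manipulations, both $\lambda_0$ and $C_1$ depend only on $\ell(X_0)$, $\bar\ell$, and $\int Q^2\,ds\big|_{t=0}$ is the bookkeeping task at the heart of the proof.
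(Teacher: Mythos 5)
Your proposal is correct and follows essentially the same route as the paper: the paper's proof computes the $p=1$ energy identity, absorbs the cross term $-2\int QQ_{ss}Q^{(4)}\,ds$ into the weighted dissipation $-8\int Q^2Q_{ss}^2\,ds$ and part of $-\frac12\int(Q^{(4)})^2\,ds$ by completing the square, interpolates the surviving $\int Q_s^4\,ds$ and $\int|Q_{ss}|^3\,ds$ terms to reach exactly your target inequality $\frac{d}{dt}I\leq -c_1I+C_2I^{11/5}$, and then uses Lemma \ref{LMQsinL1} to locate a time $t_0\leq\bar\ell\varepsilon^{-1}$ where $I$ is small, after which the smallness region is self-preserving and yields exponential decay. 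The only cosmetic differences are that the paper's Gagliardo--Nirenberg step interpolates purely between $\|Q_s\|_2$ and $\|Q^{(4)}\|_2$ (no uniform bounds on higher derivatives are needed) and extracts the small-energy time by pigeonhole on the $L^1$-in-time bound, which automatically keeps $t_0$, and hence the final constant $C$, controlled by the allowed data.
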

\begin{proof}
By Proposition \ref{PRglobal} we have $T = \infty$.
Lemma \ref{LMQsinL1} implies that for any $\varepsilon>0$ there exists a $t_0\in[0,\bar\ell\varepsilon^{-1}]$ such that
	$\vn{Q_s}_2^2(t_0) \leq \varepsilon$.
Computing more carefully in the case $p=1$ in the proof of Lemma \ref{lem:evol-intQp} we find
\begin{align*}
\frac{d}{dt}\int Q_{s}^2\,ds
 &=  -\frac12\int\left(Q^{(4)}\right)^2\,ds -2\int Q_{ss}^2\,ds-2\int QQ_{ss}Q^{(4)}\,ds-8\int Q^2Q_{ss}^2\,ds\\
 &\quad\null -\int Q_{ss}^3\,ds+\frac73\int Q_s^4\,ds+\frac12 \int Q_s^2Q^{(4)}\,ds\\
 &\leq-\frac38\int\left(Q^{(4)}\right)^2\,ds-2\int Q_{ss}^2
 -\int Q_{ss}^3\,ds+\frac73\int Q_s^4\,ds+\frac12 \int Q_s^2Q^{(4)}\,ds\\
 &\leq-\frac3{16}\int\left(Q^{(4)}\right)^2\,ds-2\int Q_{ss}^2\,ds
 -\int Q_{ss}^3\,ds+\frac83\int Q_s^4\,ds,
\end{align*}
where we completed the square to eliminate the second and third terms in the first inequality, and then expanded the last term using Young's inequality in the last line.  It remains to estimate the final two terms in a useful way, exploiting the smallness of $\int Q_s^2\,ds$: Applying the Gagliardo-Nirenberg interpolation in equality we obtain
$$
\int Q_s^4\,ds \leq C\left(\int Q_s^2\,ds\right)^{\frac{11}6}\left(\int\left(Q^{(4)}\right)^2\,ds\right)^{\frac16}\leq \alpha \int \left(Q^{(4)}\right)^2\,ds + C(\alpha)\left(\int Q_s^2\,ds\right)^{\frac{11}{5}}
$$
for any $\alpha>0$.  Similarly we find
$$
\int|Q_{ss}|^3\,ds \leq C\left(\int Q_s^2\,ds\right)^{\frac{11}{12}}\left(\int\left(Q^{(4)}\right)^2\,ds\right)^{\frac{7}{12}}
\leq \alpha\int\left(Q^{(4)}\right)^2\,ds+C(\alpha)\left(\int Q_s^2\,ds\right)^{\frac{11}{5}}
$$
Choosing $\alpha$ appropriately, and writing $I:=\int Q_s^2\,ds$, this gives the inequality
$$
\frac{d}{dt}I \leq -c_1I+C_2I^{\frac{11}{5}}\leq -c_1I\left(1-c_3I^{\frac65}\right).
$$
for some positive constants $c_1,c_2,c_3$.
In particular, if $I(t_0)\leq \left(\frac{1}{2c_3}\right)^{5/6}$ then we have $I(t)\leq I(t_0)$ for $t\geq t_0$, and $\frac{d}{dt}I\leq -\frac{c_1}{2}I$ for $t\geq t_0$, giving the required exponential convergence.

\end{proof}

The exponential decay allows us to upgrade the bounds for $\vn{Q^{(p)}}_2^2$ to exponential decay.

\begin{corollary}
Let $X_0:\RR\rightarrow\RR^2$ be an admissible  $L$-cocompact curve.
Along the inversive curve lengthening flow $X:\RR\times[0,T)\rightarrow\CC\PP^1$ with initial data $X_0$ we have for all $p\in\NN$
	\[
		\int \left(Q^{(p)}\right)^2\,ds \le be^{-\lambda t/2}
	\]
where $b$ depends only on $p$, $\ell(0)$, $\bar\ell$ and $\int Q^2\,ds\Big|_{t=0}$.
\end{corollary}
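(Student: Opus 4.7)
The plan is to combine the exponential decay of $\vn{Q_s}_{L^2}^2$ from Lemma \ref{lem:Qs-exp-decay} with the uniform-in-time bounds on all higher derivatives $\vn{Q^{(m)}}_{L^2}^2$ from Proposition \ref{prop:derivbounds} via interpolation. The crucial observation is that $Q_s$ has zero mean over any period (since $Q$ is $\ell$-periodic), so the standard one-dimensional Gagliardo--Nirenberg inequality on $\RR/\ell\ZZ$, applied to $u=Q_s$ with differentiation order $j=p-1$ and any integer $m\geq p-1$, yields
\[
    \vn{Q^{(p)}}_{L^2} \;=\; \vn{(Q_s)^{(p-1)}}_{L^2} \;\leq\; C\,\vn{Q^{(m+1)}}_{L^2}^{(p-1)/m}\,\vn{Q_s}_{L^2}^{\,1-(p-1)/m},
\]
where the constant $C$ depends only on $\bar\ell$ (hence on the homotopy class of $X_0$) through Corollary \ref{cor:lbd}.

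For $p\geq 2$ I would fix $m=2(p-1)$, which makes the exponent on $\vn{Q_s}_{L^2}$ exactly $1/2$ (the case $p=1$ is Lemma \ref{lem:Qs-exp-decay} itself). For any fixed $\tau>0$ and $t\geq\tau$, Proposition \ref{prop:derivbounds} supplies a bound $\vn{Q^{(m+1)}}_{L^2}^2\leq M_{p,\tau}$ depending only on $p$, $\tau$ and the constants listed in the corollary, while Lemma \ref{lem:Qs-exp-decay} gives $\vn{Q_s}_{L^2}^2\leq C'e^{-\lambda t}$. Squaring the interpolation inequality and substituting yields
\[
    \vn{Q^{(p)}}_{L^2}^2 \;\leq\; C^2\, M_{p,\tau}^{\,1/2}\, (C')^{1/2}\, e^{-\lambda t/2},
\]
valid on $[\tau,\infty)$. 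On the initial segment $[0,\tau]$, Proposition \ref{prop:derivbounds} already bounds $\vn{Q^{(p)}}_{L^2}^2$ uniformly (once $\tau$ is bounded away from $0$), and enlarging the constant by a factor $e^{\lambda\tau/2}$ absorbs this segment into a single $b$ giving the stated bound for all $t\geq 0$.

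I do not anticipate a serious obstacle: this is essentially the same interpolation mechanism that drove Lemma \ref{lem:Qs-exp-decay}, now exploiting the uniform higher-derivative bounds already in hand. The main point requiring attention is simply tracking constants: the Gagliardo--Nirenberg constant here depends only on $\bar\ell$ and the chosen exponents (no lower bound on the period is needed, since $Q_s$ has zero mean), so the resulting $b$ depends only on the quantities permitted by the statement of the corollary.
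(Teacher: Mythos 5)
Your proposal is correct and takes essentially the same route as the paper: with your choice $m=2(p-1)$, the Gagliardo--Nirenberg step reduces exactly to the paper's inequality $\int\left(Q^{(p)}\right)^2\,ds \le \vn{Q_s}_2\,\vn{Q^{(2p-1)}}_2$ (which the paper gets directly by integration by parts and Cauchy--Schwarz, with constant $1$), and both arguments then conclude by pairing the exponential decay of $\vn{Q_s}_2$ from Lemma \ref{lem:Qs-exp-decay} with the large-time bound on $\vn{Q^{(2p-1)}}_2$ from Proposition \ref{prop:derivbounds}. The only difference is cosmetic: you split off an initial time segment $[0,\tau]$ to track constants, whereas the paper works implicitly in the large-time regime.
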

\begin{proof}
We calculate for $p>1$
\[
	\int\left(Q^{(p)}\right)^2\,ds
	\le \vn{Q_s}_2\vn{Q^{({2p-1)} }}_2
	\le Ce^{-\lambda t/2}
\]
using the large time bound from Proposition \ref{prop:derivbounds} to bound the second factor.
\end{proof}

A modification of the proof of Proposition \ref{prop:LTE} (using the exponential decay to give estimates for all time and convergence of $Q$ to a constant) then gives the following:

\begin{corollary}\label{cor:longtime}
Let $X_0:\RR\rightarrow\CC\PP^1$ be an admissible $L$-cocompact curve.
Then the inversive curve lengthening flow $X:\RR\times[0,\infty)\rightarrow\CC\PP^1$
converges
exponentially fast in $C^p$ for every $p\geq 0$ to a limiting curve $X_\infty$ which is an $L$-cocompact loxodromic curve.
\end{corollary}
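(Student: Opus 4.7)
The plan is to bootstrap the exponential $L^2$ decay of $\vn{Q^{(p)}}_2$ from the preceding corollary into uniform exponential smooth convergence of $X$, following the strategy of the proof of Proposition \ref{prop:LTE} but now exploiting decay instead of mere boundedness.

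First, since $\ell(t)\in[\ell(X_0),\bar\ell]$ along the flow (the lower bound from $\frac{d\ell}{dt}=\int Q_s^2\,ds\geq 0$, the upper from Corollary \ref{cor:lbd}), Sobolev embedding on one period converts $\int(Q^{(p)})^2\,ds\leq b\,e^{-\lambda t/2}$ into $\vn{Q^{(p)}}_\infty\leq b_p e^{-\lambda t/4}$ for every $p\geq 1$.  Combined with the a priori bound on $\vn{Q}_\infty$, this gives $\vn{\mathcal{C}^{(p)}}_\infty\leq c_p e^{-\lambda t/4}$ for every $p\geq 0$, using $\mathcal{C}=\tfrac12 Q^{(4)}+2QQ_{ss}+Q_s^2$.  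Inserted into \eqref{eq:evolQ} this yields $\vn{\partial_t Q}_\infty\leq c'e^{-\lambda t/4}$, so $Q(u,t)$ is exponentially Cauchy in $L^\infty$ and converges to a limit which is spatially constant since $\vn{Q_s}_\infty\to 0$; call this constant $Q_\infty$.

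Second, promote this to smooth convergence of the Gauss map.  Fix a reference parameter $u\in\RR$.  The Jacobian $\rho=\partial s/\partial u$ satisfies $\partial_t\rho=-\mathcal{C}\rho$ with $\int_0^\infty\vn{\mathcal{C}}_\infty\,dt<\infty$, so $\rho(u,t)$ stays bounded above and below and converges exponentially to a positive limit $\rho_\infty(u)$; inductively the same holds for each $\partial_u^p\rho$.  The variation equation $\partial_t G=TG$ from \eqref{eq:dGdt} applies with $f=-Q_s$, and inspection shows that every entry of $T$ contains at least one factor from $\{Q^{(k)}:k\geq 1\}$, hence $\vn{T}_\infty\leq Ce^{-\lambda t/4}$.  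Choosing a smooth lift to $SL(2,\CC)$, the bound $\vn{G(\cdot,t)}_\infty\leq\vn{G(\cdot,0)}_\infty\exp\!\bigl(\int_0^t\vn{T}_\infty\,d\tau\bigr)$ is uniform in $t$, and a standard Gr\"onwall/Cauchy argument then yields exponential convergence of $G(u,t)$ to some $G_\infty(u)$ in $L^\infty(u)$.  Applying the Serret-Frenet equation $\partial_u G=\rho M G$ to express $\partial_u^p G$ inductively in terms of $G$, $\rho$ and derivatives of $Q$ extends this to exponential convergence of $\partial_u^p G$ for every $p$; since $X=G^{-1}\bigl[\begin{smallmatrix}0\\1\end{smallmatrix}\bigr]$, the corresponding exponential $C^p$ convergence of $X(\cdot,t)\to X_\infty$ follows.

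The limit $X_\infty$ has constant fundamental invariant $Q_\infty$ and is therefore loxodromic by definition.  The $L$-cocompactness passes to the limit because the flow \eqref{eq:ICLF} is M\"obius-equivariant, so the identity $X(u+\ell_0,t)=LX(u,t)$ persists throughout the evolution.  The main technical obstacle is the non-compactness of $PSL(2,\CC)$: one must choose a coherent lift to $SL(2,\CC)$ and verify that $G$ does not drift off, and this is precisely where the integrability $\int_0^\infty\vn{T}_\infty\,dt<\infty$ — the essential payoff of the exponential decay — is used.  Once this is in hand, the rest of the argument is a straightforward adaptation of the bookkeeping in the proof of Proposition \ref{prop:LTE}.
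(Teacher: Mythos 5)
Your proposal is correct and follows essentially the same route as the paper: the paper proves this corollary precisely by the modification you describe, namely rerunning the parametrization and Gauss-map estimates from the proof of Proposition \ref{prop:LTE} with the exponential decay of $\|Q^{(p)}\|_{L^2}$ (hence of $\mathcal{C}$ and of the generator $T$, both now integrable in time) replacing the finite-time bounds, and with $Q\to Q_\infty$ constant identifying the limit as loxodromic, while equivariance preserves $L$-cocompactness. The only difference is that the paper compresses all of this into a one-sentence remark, whereas you have supplied the details.
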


\section{Flow of irregular curves}
\label{sec:rough}

In this section we observe that our estimates allow us to consider solutions originating from non-smooth initial curves:

\begin{theorem}\label{thm:nonsmooth}
    Let $X_0:\ \RR\to\CC\PP^1$ be an admissible $L$-cocompact curve for which $\int Q^2\,ds<\infty$.  Then there exists a unique solution $X:\ \RR\times[0,\infty)\to\CC\PP^1$ of the inversive curve lengthening flow \eqref{eq:ICLF} which is smooth for $t>0$ and in $C^{0,1/12}\left([0,\infty),C^4(\RR,\CC\PP^1)\right)$ (in particular, the Gauss maps $G(u,t)$ are H\"older continuous on $\RR\times[0,\infty)$).
\end{theorem}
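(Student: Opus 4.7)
The plan is to regularise the initial data, apply the Section~\ref{sec:evoleqn} estimates to each regularisation, and pass to a limit, with the H\"older regularity at $t=0$ extracted from the smoothing bounds of Proposition~\ref{prop:derivbounds}.

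First I would construct a sequence $\{X_0^n\}$ of smooth admissible $L$-cocompact curves, in the same homotopy class as $X_0$, with fundamental invariants $Q_n\to Q_0$ in $L^2$ of one period and $\int Q_n^2\,ds \to \int Q_0^2\,ds$. One way is to mollify $Q_0$ as a periodic function on $\RR/\ell\ZZ$, reconstruct curves via the Remark following~\eqref{eq:SF}, and then correct the resulting monodromy by a small inversive transformation together with a choice of initial frame so that the generating M\"obius element is exactly $L$. Applying Propositions~\ref{LMQbd},~\ref{prop:derivbounds} and~\ref{prop:LTE} yields smooth solutions $X^n:\RR\times[0,\infty)\to\CC\PP^1$ of~\eqref{eq:ICLF} with
\[
\int Q_n^{2}\,ds \le \int Q_0^{2}\,ds + 1,\qquad \int \bigl(Q_n^{(p)}\bigr)^{2}\,ds \le c_p\bigl(1+t^{-p/3}\bigr),
\]
uniformly in $n$. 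Combining these with the Sobolev interpolation $\|f\|_\infty^{2}\le C\|f\|_2\|f'\|_2 + C\|f\|_2^{2}$ on a period (whose length is controlled above by Corollary~\ref{cor:lbd}) gives uniform pointwise bounds on every derivative of $Q_n$, hence via~\eqref{eq:SF} and~\eqref{eq:dGdt} on every derivative of $G^n$ and $X^n$, on $\RR\times[\tau,\infty)$ for each $\tau>0$. Arzel\`a--Ascoli then extracts a subsequential limit $X$, smooth on $\RR\times(0,\infty)$, solving~\eqref{eq:ICLF} and preserving admissibility and $L$-cocompactness.

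Continuity up to $t=0$, with the specific H\"older exponent $1/12$ in $C^4$, is the delicate step. The Sobolev interpolation applied to the bounds $\|Q^{(p)}\|_2 \le C t^{-p/6}$ gives $\|Q^{(p)}\|_\infty \le C t^{-(2p+1)/12}$ on $(0,1]$, so in particular $\|Q^{(5)}\|_\infty \le C t^{-11/12}$. Since $\partial_t X = -Q_s\mathcal{N}$, and the spatial derivatives up to order~$4$ of $Q_s\mathcal{N}$ are polynomial expressions in $Q, Q_s, \ldots, Q^{(5)}$ together with bounded frame fields (differentiating $\mathcal{N}$ using~\eqref{eq:SF}), we obtain $\|\partial_tX(\cdot,t)\|_{C^4(\RR)}\le C t^{-11/12}$ for $t\in(0,1]$. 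Integrating between $0\le t_1<t_2\le 1$ and using concavity of $\tau\mapsto\tau^{1/12}$ yields
\[
\|X(\cdot,t_2)-X(\cdot,t_1)\|_{C^4(\RR,\CC\PP^1)} \le C\int_{t_1}^{t_2}\tau^{-11/12}\,d\tau \le 12C(t_2-t_1)^{1/12},
\]
which furnishes both a continuous extension to $t=0$ (with $X(\cdot,0)=X_0$ identified via the $L^2$ convergence $Q_n\to Q_0$) and the claimed H\"older class. Uniqueness in this class follows from an energy argument for $\int(Q-\tilde Q)^2\,ds$ between two such solutions, closing a Gronwall inequality using the $L^\infty$ bounds above; exponential convergence to a loxodromic limit is inherited from Corollary~\ref{cor:longtime}.

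The main obstacle I anticipate is the construction of the approximating sequence: simultaneously requiring smoothness, admissibility, \emph{exact} $L$-cocompactness, and $L^2$ convergence of $Q$ is delicate because a naive mollification of $Q_0$ destroys the rigid algebraic matching with $L$, and the required correction of the monodromy must be small enough to be absorbed into the $L^2$ convergence of $Q_n$. The secondary technical point is the interpolation bookkeeping producing the precise exponent $1/12$, which is dictated by the sixth-order parabolic scaling of~\eqref{eq:ICLF}.
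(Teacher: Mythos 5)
Your overall strategy --- regularise the initial curve, run the flow from the smooth approximations, invoke the uniform smoothing bounds of Proposition~\ref{prop:derivbounds} to get $\|Q^{(p)}\|_{L^2}\le Ct^{-p/6}$ and hence $\|Q^{(p)}\|_{\infty}\le Ct^{-(2p+1)/12}$, and then integrate the resulting $t^{-11/12}$ bound on the time derivative to obtain the $C^{0,1/12}$ regularity at $t=0$ --- is exactly the paper's argument, and that part of your write-up is sound. The problem is the approximation step, which is where your route genuinely diverges from the paper's, and where it breaks.

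You propose to mollify $Q_0$ as a periodic function, reconstruct a curve by the Serret--Frenet existence remark, and then ``correct the resulting monodromy by a small inversive transformation together with a choice of initial frame so that the generating M\"obius element is exactly $L$.'' This correction cannot work as described. The monodromy of the reconstructed curve is the element $L_n\in PSL(2,\CC)$ obtained as the period map of the linear system \eqref{eq:SF} with the mollified invariant $Q_n$; changing the initial frame $G(0)=G_0$, or postcomposing the whole curve with a M\"obius transformation $T$, replaces $L_n$ by a conjugate $TL_nT^{-1}$. Conjugation preserves the trace, i.e.\ the eigenvalue pair $\{\lambda_n,\lambda_n^{-1}\}$, which by Lemma~\ref{lem:symmetry} is precisely the conjugacy invariant of $L$. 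Mollifying $Q_0$ perturbs this trace, and generically $\operatorname{tr}L_n\neq\pm\operatorname{tr}L$, so \emph{no} choice of frame or ambient inversive transformation makes the reconstructed curve exactly $L$-cocompact --- and exact $L$-cocompactness is what feeds into Corollary~\ref{cor:lbd} and the uniform constants of Proposition~\ref{prop:derivbounds}. To repair your construction you would have to perturb $Q_n$ itself (a two-real-parameter family of corrections plus an implicit-function or degree argument to hit the prescribed trace), which is an additional nontrivial step you neither state nor prove. The paper sidesteps this entirely: it smooths the \emph{support function} of the curve by the heat equation, in a stereographic projection in which $L$ acts by scaling and rotation; since the heat semigroup commutes with that linear action and preserves monotonicity of the radius of curvature, the approximations are automatically smooth, admissible, and \emph{exactly} $L$-cocompact, with $H^5$ continuity giving convergence of $\int Q^2\,ds$. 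I would also note that the paper's proof, like yours, establishes existence by compactness; your sketched Gronwall argument for uniqueness is an addition rather than a discrepancy, but as written it too relies on bounds that degenerate as $t\to 0$, so closing it on $[0,\infty)$ rather than $[\tau,\infty)$ needs more care than one line.
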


We remark that since the solutions immediately become smooth, the long time behaviour is as described in Corollary \ref{cor:longtime}.

\begin{proof}
We construct a solution as a limit of solutions arising from a sequence of smooth admissible $L$-cocompact initial curves $X_{i,0}$ which converge to $X_0$ as $i\to\infty$ (such approximations can be constructed by applying the heat equation to the support function of the curve, in a stereographic projection to $\RR^2$ in which $L$ acts by scaling and rotation -- note that this approximation is continuous in $H^5$ and so $\int_{X_{i,0}}Q^2\,ds$ converges to $\int_{X_0}Q^2\,ds$ as $i\to\infty$, and we note that the inversive arc length element also converges as $i\to\infty$).  

The estimates of Proposition \ref{prop:derivbounds} imply that $\|Q^{(p)}\|_{L^2(ds)}\leq C_pt^{-p/6}$ for each $p>0$, with constant $C_p$ independent of $i$.  By the Gagliardo-Nirenberg interpolation inequality, this implies that
$$
\|Q^{(p)}\|_{L^\infty}\leq C\|Q^{(p)}\|_{L^2}^{1/2}\|Q^{(p+1)}|_{L^2}^{1/2}\leq Ct^{-p/12-(p+1)/12} = Ct^{-p/6-1/12}.
$$
In particular each of the solutions $X_{i,t}$ is bounded in $C^p$ for each $p$ and each $t>0$, with bound independent of $i$.
The expression \eqref{eq:dGdt} for the time derivative of the Gauss map implies that
$$
\left|\frac{dG}{dt}\right|\leq C|G|\|Q\|_{C^5}\leq C|G|t^{-5/6-1/12} = C|G|t^{-11/12},
$$
implying that $|G(u,t)-G(u,0)|\leq Ct^{1/12}$, with $C$ independent of $i$.  It follows that (for a subsequence of $i\to\infty$) the solutions $X_{i,t}$ converge in $C^\infty_{\mathrm{loc}}((0,\infty)\times\RR,\CC\PP^1)$ to a smooth solution $X(.,t)$, and that $G(.,t)$ converges to $G(X_0)$ in $C^{0,1/12}$.
\end{proof}

\begin{remark}
  It seems reasonable to expect that solutions of the inversive curve lengthening flow might exist for any initial curve which is co-compact and admissible, a condition which could be interpreted in the sense that the osculating circles are nested along the curve.  This is equivalent to the curve being $C^{1,1}$ with increasing curvature function, while the condition in Theorem \ref{thm:nonsmooth} amounts to a much more restrictive $H^5$ condition on the curve.  At present we are not aware of a reasonable strategy to prove such a conjecture.
  \end{remark}

\begin{remark}
    A further problem of interest is to define solutions (perhaps in a suitably generalised sense) for curves which are not admissible (in particular, for suitable classes of closed curves).  We are encouraged by the result of Angenent, Sapiro and Tannenbaum \cite{AST98} which considers the affine normal flow for curves which need not be convex (that is to say, the curves need not be admissible in the context of special affine geometry).
\end{remark}

\begin{bibdiv}
\begin{biblist}

\bib{ACLF}{article}{
 author={Andrews, Ben},
 issn={0075-4102},
 issn={1435-5345},
 doi={10.1515/crll.1999.506.43},
 review={Zbl 0948.53039},
 title={The affine curve-lengthening flow},
 journal={Journal f{\"u}r die Reine und Angewandte Mathematik},
 volume={506},
 pages={43--83},
 date={1999},
 publisher={De Gruyter, Berlin},
 eprint={hdl.handle.net/1885/91800},
}

\bib{AST98}{article}{
      author={Angenent, Sigurd},
      author={Sapiro, Guillermo},
      author={Tannenbaum, Allen},
      title={On the affine heat equation for non-convex curves},
      journal={Journal of the American Mathematical Society},
      volume={11},
      number={3},
      date={1998},
      pages={601--634},
}

\bib{Blaschke}{book}{
 author={Blaschke, Wilhelm},
 issn={0072-7830},
 issn={2196-9701},
 book={
 title={Vorlesungen \"uber Differentialgeometrie und geometrische Grundlagen von Einsteins Relativit\"atstheorie. III: Differentialgeometrie der Kreise und Kugeln. Bearbeitet von \textit{G. Thomsen}.},
 },
 doi={10.1007/978-3-642-50823-3},
 review={JFM 55.0422.01},
 language={German},
 title={Vorlesungen {\"u}ber Differentialgeometrie und geometrische Grundlagen von Einsteins Relativit{\"a}tstheorie. III: Differentialgeometrie der Kreise und Kugeln. Bearbeitet von \emph{G. Thomsen}.},
 series={Grundlehren der Mathematischen Wissenschaften},
 volume={29},
 date={1929},
 publisher={Springer, Cham},
 eprint={www.hti.umich.edu/cgi/t/text/text-idx?c=umhistmath;idno=ABN4015},
}


\bib{DKS01}{article}{
      author={Dziuk, Gerhard},
      author={Kuwert, Ernst},
      author={Schatzle, Reiner},
      title={Evolution of Elastic Curves in $\mathbb{R}^n$: Existence and Computation},
      journal={SIAM Journal on Mathematical Analysis},
      volume={33},
      number={5},
      pages={1228--1245},
      year={2002},
      publisher={SIAM},
    }

\bib{Fialkow}{article}{
   author={Fialkow, Aaron},
   title={The conformal theory of curves},
   journal={Trans. Amer. Math. Soc.},
   volume={51},
   date={1942},
   pages={435--501},
   issn={0002-9947},
   review={\MR{0006465}},
   doi={10.2307/1990075},
}

\bib{Mullins}{book}{
   author={Mullins, George Walker},
   title={Differential invariants under the inversion group},
   note={Thesis (Ph.D.)--Columbia University},
   publisher={ProQuest LLC, Ann Arbor, MI},
   date={1917},
   pages={22},
   review={\MR{2936022}},
}

\bib{Polden}{book}{
    author={Polden, Alexander},
    title={Curves and surfaces of least total curvature and fourth order flows},
    note={Thesis (Ph.D.) -- Universit\"at Tubingen},
    date={1996},
    pages={91},
}

\bib{Takasu}{book}{
 author={Takasu, Tsurusaburo},
 book={
 title={Differentialgeometrien in den Kugelr\"aumen. Bd. 1: Konforme Differentialkugelgeometrie von Liouville und M\"obius},
 },
 review={Zbl 0019.04401},
 language={German},
 title={Differentialgeometrien in den Kugelr{\"a}umen. Bd. 1: Konforme Differentialkugelgeometrie von Liouville und M{\"o}bius},
 date={1938},
}

\end{biblist}
\end{bibdiv}
	
\end{document}